\newtheorem{theorem}{Theorem}
\newtheorem{lemma}[theorem]{Lemma}
\newtheorem{proposition}[theorem]{Proposition}
\newtheorem{remark}[theorem]{Remark}
\newcommand{\be}{\begin{equation}}
\newcommand{\ee}{\end{equation}}
\newcommand{\bea}{\begin{eqnarray*}}
	\newcommand{\eea}{\end{eqnarray*}}
\newcommand{\beq}{\begin{eqnarray}}
\newcommand{\eeq}{\end{eqnarray}}
\newcommand{\mV}{\mathsf V}
\newcommand{\mv}{\mathsf v}
\newcommand{\mE}{\mathsf E}
\newcommand{\me}{\mathsf e}
\title[Some spectral comparison results on infinite quantum graphs]{Some spectral comparison results on infinite quantum graphs} 
\subjclass[2010]{}
\keywords{}
\author[P.~Bifulco]{Patrizio Bifulco}
\author[J.~Kerner]{Joachim Kerner}
\address{Lehrgebiet Analysis, Fakult\"at Mathematik und Informatik, Fern\-Universit\"at in Hagen, D-58084 Hagen, Germany}
\email{patrizio.bifulco@fernuni-hagen.de}
\address{Lehrgebiet Analysis, Fakult\"at Mathematik und Informatik, Fern\-Universit\"at in Hagen, D-58084 Hagen, Germany}
\email{joachim.kerner@fernuni-hagen.de}
\date{\today}
\thanks{
}
\begin{document}

	\begin{abstract} In this paper we establish spectral comparison results for Schrödinger operators on a certain class of infinite quantum graphs, using recent results obtained in \cite{BifulcoKerner,BSSS} for finite graphs. We also show that new features do appear on infinite quantum graphs such as a modified local Weyl law. In this sense, we regard this paper as a starting point for a more thorough investigation of spectral comparison results on more general infinite metric graphs. 
	\end{abstract}
	
\maketitle

\section{Introduction}

Metric or quantum graphs have become a popular model in many areas of pure and applied sciences. They are obtained by starting with a discrete graph and associating a length to each edge thereof. In this way one obtains a collection of intervals that are glued together in the vertices of the graph and since one is left with intervals, methods of classical one-dimensional analysis can be applied. Nevertheless, since the geometry of a graph is generally more complex than that of an interval, new interesting features appear. In this sense, metric graphs interpolate between one-dimensional and higher-dimensional aspects as studied, for example, in the setting of manifolds. For a more general introduction to quantum graphs we may refer to \cite{BerkolaikoBook,Berkolaiko,Mugnolo} and references therein. 

In any case, most of the literature on quantum graphs is concerned with so-called finite graphs which means that the underlying discrete graph has only a finite number of vertices and edges; in many cases, one also assumes in addition that the length of each edge is finite since then the metric graph is also compact. Nevertheless, the study of infinite quantum graphs has become increasingly popular and we shall refer to \cite{AKM,ExnerInfinite,KNEstimates,KostenktoNicolussi} and references therein for more information about this exciting research direction. As already suggested by title, also in this paper we are concerned with Schrödinger operators on infinite quantum graphs. More explicitly, our main aim is to translate recent spectral comparison results for finite graphs obtained in \cite{BSSS,BifulcoKerner} to a certain class of infinite quantum graphs. For the sake of readability, we start with a simple but \textit{infinite} quantum graph which has total finite length and derive our main comparison results for such a graph first. Subsequently, this also allows for a more direct comparison with results obtained in~\cite{EggerSteinerInfinite} for a simple infinite quantum graph but with infinite length. As will become clear in the following, our spectral comparison results are strongly intertwined with other important concepts such as the heat kernel and the standard Weyl asymptotics for the eigenvalue counting function which again is linked to the behaviour of the heat kernel at small times via Karamata's Tauberian theorem. Therefore, as soon as those objects show a different behaviour than the classical and expected one, new interesting results will follow as illustrated in Section~\ref{ModifiedWeyl}. In this sense, we regard this paper as a first step towards a more thorough investigation of spectral comparison results on more general infinite quantum graphs such as, for example, infinite trees~\cite{FrankKovarikTrees} or comb-like graphs~\cite{MugnoloTaeuferInfinite}.

The paper is organized as follows: In Section~\ref{TheSetting} we start with a specific infinite quantum graph, the path graph of finite length, and formulate (rather standard) results which are relevant for the proof of the main results. In Section~\ref{MainResults} we then formulate our main comparison results for the infinite path graph of finite length (Theorem~\ref{MainResult0} and Theorem~\ref{MainResult}); the proof of Theorem~\ref{MainResult} is given in Section~\ref{ProofMainResult}. In Section~\ref{ModifiedWeyl} we then derive a modified local Weyl law for a certain infinite path graph of infinite length. Finally, in Section~\ref{MainResultsII} we generalize the main comparison results to a larger class of infinite quantum graphs (Theorem~\ref{GenI} and Theorem~\ref{GenII}).

\section{The setting}\label{TheSetting}
\footnote{The construction of the Hamiltonian as well as the spectral properties described in this section are rather standard but we include them for completeness, see, e.g., \cite{Brasche,SSTrace,AKM} and references therein for related considerations} Starting with the interval $I:=[0,L]$, $L > 0$, we build an infinite metric graph $\mathcal{G}$ over $I$ whose vertex set $\mV_{\mathcal{G}}$ consists of the two outer vertices $\mv_1=0$ and $\tilde{\mv}=L$ and a sequence $(\mv_n)_{n \geq 2} \subset [0,L)$, $\mv_n < \mv_{n+1}$ and such that 
\begin{equation}
    \sum_{n=1}^{\infty} (\mv_{n+1}-\mv_{n})=L\ ,
\end{equation}
see also Figure \ref{fig:path-graph} right below.
\begin{figure}[h]
\begin{tikzpicture}[scale=0.50]
      \tikzset{enclosed/.style={draw, circle, inner sep=0pt, minimum size=.10cm, fill=gray}}

      \node[enclosed, label={below: $\mv_3$}] (C) at (8,4) {};
      \node[enclosed, label={below: $\mv_1$}] (A) at (-1,4) {};
      \node[enclosed, label={below: $\mv_2$}] (B) at (4,4) {};
      \node[enclosed, label={below: $\mv_4$}] (D) at (11,4) {};
      \node[enclosed,] (E) at (13,4) {};
      \node[enclosed, label={below: $L$}, fill = white] (F) at (14,4) {};

      \draw (A) -- (B) node[midway, above] (edge1) {};
      \draw (B) -- (C) node[midway, above] (edge2) {};
      \draw (C) -- (D) node[midway, above] (edge3) {};
      \draw (D) -- (E) node[midway, above] (edge4) {};
      \draw (E) edge[dotted] (F) node[midway, above] (edge4) {};
     \end{tikzpicture}
     \vspace{-1.5cm}
     \caption{The infinite graph $\mathcal{G}$ on infinite vertices $\mv_n$ and finite length $L$.}\label{fig:path-graph}
     \end{figure}
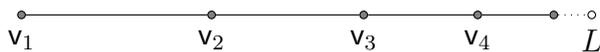
     
On the graph Hilbert space $L^2(\mathcal{G})=L^2(I)$ we then introduce the quadratic form 
\begin{equation}\label{DefForm}
    h^{\sigma}[\varphi]=\int_I |\varphi^{\prime}|^2\ \mathrm{d}x+\sum_{n=1}^{\infty}\sigma_n |\varphi(\mv_n)|^2\ ,
\end{equation}
where $(\sigma_n)_{n \in \mathbb N} \subset [0,\infty)$ and with form domain 
$$\mathcal{D}_{\sigma}:=\{\varphi \in H^1(I):\ h^{\sigma}[\varphi] < \infty \}\ .$$
\begin{theorem}\label{TheoremForm} The form $\left(h^{\sigma}[\cdot],\mathcal{D}_{\sigma}\right)$ is densely defined, closed and positive.
\end{theorem}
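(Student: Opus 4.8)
The plan is to verify the three claimed properties — positivity, dense definedness, and closedness — in that order, with closedness being the heart of the matter.

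First, positivity is immediate: since $(\sigma_n) \subset [0,\infty)$, every term in \eqref{DefForm} is nonnegative, so $h^\sigma[\varphi] \geq 0$ for all $\varphi \in \mathcal{D}_\sigma$. Next, dense definedness: the form domain $\mathcal{D}_\sigma$ contains every $\varphi \in H^1(I)$ that vanishes on a neighbourhood of each $\mv_n$ for all but finitely many $n$ — in particular it contains $C_c^\infty(I \setminus \mV_{\mathcal{G}})$ and more: any smooth function supported away from a tail of the vertex sequence. Such functions are dense in $L^2(I)$ because the complement of the (countable) vertex set is dense and has full measure; alternatively, one notes $\mathcal{D}_\sigma$ contains all piecewise-affine $H^1$ functions that are locally constant near the accumulation point $L$, and these are already dense in $L^2(I)$. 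So $\overline{\mathcal{D}_\sigma} = L^2(I)$.

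The main work is closedness, i.e. completeness of $\mathcal{D}_\sigma$ under the form norm $\|\varphi\|_{h^\sigma}^2 := h^\sigma[\varphi] + \|\varphi\|_{L^2(I)}^2$. The strategy is: take a $\|\cdot\|_{h^\sigma}$-Cauchy sequence $(\varphi_k)$. Since $\|\varphi_k'\|_{L^2}^2 + \|\varphi_k\|_{L^2}^2 \leq \|\varphi_k\|_{h^\sigma}^2$, the sequence is Cauchy in $H^1(I)$, which is complete, so $\varphi_k \to \varphi$ in $H^1(I)$ for some $\varphi \in H^1(I)$; in particular $\varphi_k'\to\varphi'$ in $L^2(I)$. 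The remaining task is to show $\varphi \in \mathcal{D}_\sigma$ (i.e. $\sum_n \sigma_n |\varphi(\mv_n)|^2 < \infty$) and that $h^\sigma[\varphi_k - \varphi] \to 0$. Here one uses that $H^1(I)$ embeds continuously into $C(\bar I)$, so $\varphi_k \to \varphi$ uniformly, hence $\varphi_k(\mv_n) \to \varphi(\mv_n)$ for every fixed $n$. Writing $b[\psi] := \sum_n \sigma_n |\psi(\mv_n)|^2$ for the boundary part, one shows $(\varphi_k)$ is Cauchy with respect to $b[\cdot]^{1/2}$ (a seminorm): for each truncation $\sum_{n=1}^N \sigma_n|\varphi_j(\mv_n)-\varphi_k(\mv_n)|^2 \leq b[\varphi_j - \varphi_k] \to 0$, and letting $k\to\infty$ using pointwise convergence gives $\sum_{n=1}^N \sigma_n|\varphi_j(\mv_n)-\varphi(\mv_n)|^2 \leq \limsup_{k} b[\varphi_j-\varphi_k]$ uniformly in $N$; taking $N\to\infty$ bounds $b[\varphi_j - \varphi]$, which is finite and tends to $0$ as $j\to\infty$. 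Combined with $b[\varphi] \leq 2 b[\varphi_j - \varphi] + 2 b[\varphi_j] < \infty$, this gives $\varphi \in \mathcal{D}_\sigma$ and $h^\sigma[\varphi_k - \varphi] = \|\varphi_k' - \varphi'\|_{L^2}^2 + b[\varphi_k - \varphi] \to 0$, establishing closedness.

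The \textbf{main obstacle} is the interchange of the two limits ($k\to\infty$ in the Cauchy estimate versus $N\to\infty$ in the series truncation) needed to control the infinite boundary sum for the limit function $\varphi$; the clean way around it is the truncation argument above, which exploits that every partial sum $b_N[\psi] = \sum_{n=1}^N \sigma_n|\psi(\mv_n)|^2$ is continuous on $H^1(I)$ (finite sum of point evaluations squared) and that $b_N \leq b_{N+1} \leq b$ pointwise, so $b$ is a monotone supremum of form-norm-continuous functionals — hence lower semicontinuous — which is exactly what is needed to pass the Cauchy bound to the limit. Uniform (sup-norm) control from the $H^1$-embedding is what makes each $b_N$ continuous, and this is the one place where the finite total length $L$ (so that $I$ is a bounded interval and $H^1(I) \hookrightarrow C(\bar I)$) is quietly used.
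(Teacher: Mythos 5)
Your proposal is correct and follows essentially the same route as the paper: positivity from nonnegativity of the terms, density via smooth functions supported away from the accumulation point of the vertices, and closedness by passing to an $H^1$-limit and then controlling the boundary sum $\sum_n \sigma_n|\varphi(\mv_n)|^2$ via pointwise convergence at the vertices. Your explicit truncation/lower-semicontinuity argument is just an unpacked version of the paper's one-line appeal to Fatou's lemma, so the two proofs coincide in substance.
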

\begin{proof} 
Positivity readily follows from the definition. Since $C^{\infty}_0(0,L) \subset  \mathcal{D}_{\sigma}$, the form is densely defined. 

Now, let $(\varphi_n)_{n \in \mathbb{N}} \subset \mathcal{D}_{\sigma}$ be a Cauchy sequence with respect to the form norm $\|\cdot\|^2_{h^\sigma}:=h^{\sigma}[\cdot]+\|\cdot\|^2_{L^2(I)}$. Then, by~\eqref{DefForm}, $(\varphi_n)_{n \in \mathbb{N}}$ is also a Cauchy sequence in $H^1(I)$ and hence converges to a function $\varphi \in H^1(I)$. Employing Fatou's lemma we then conclude $\varphi \in \mathcal{D}_{\sigma}$ and hence the statement.
\end{proof}
\begin{remark} Whenever one has $\sigma \in \ell^1(\mathbb{N})$ one infers that $\mathcal{D}_{\sigma}=H^1(I)$ since functions in $H^1(I)$ have a bounded representative, in other words, $H^1(\mathcal{G}) \hookrightarrow L^\infty(\mathcal{G})$.
\end{remark}

According to Theorem~\ref{TheoremForm} and the representation theorem of forms, there exists a unique self-adjoint operator associated with $\left(q_{\sigma}[\cdot],\mathcal{D}_{\sigma}\right)$. Informally, this operator can be written as
\begin{equation}
H_{\sigma}=-\frac{\mathrm{d}^2}{\mathrm{d}x^2}+\sum_{n=1}^{\infty}\sigma_n \delta(x-\mv_n)\ .
\end{equation}
Note that the $\delta$-potentials in the vertices lead to discontinuities of the derivatives, while the function still remains continuous. More explicitly, at a given vertex $\mv_n$, the sum of all (inward normal, where inward means into the adjacent edge) derivatives equals the value of the function in the vertex times $\sigma_n$. Such matching conditions are often called \emph{$\delta$-coupling conditions}. 

In any case, since the form domain $\mathcal{D}_{\sigma}$ is compactly embedded in $L^2(I)$, we immediately obtain 
\begin{proposition} The operator $H_{\sigma}$ has purely discrete spectrum.
\end{proposition}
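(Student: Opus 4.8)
The plan is to deduce that $H_\sigma$ has \emph{compact resolvent}, which for a semibounded self-adjoint operator is equivalent to the spectrum being purely discrete (isolated eigenvalues of finite multiplicity accumulating only at $+\infty$). The key input is the compactness of the embedding of the form domain $\mathcal{D}_\sigma$, equipped with the form norm $\|\cdot\|_{h^\sigma}$, into $L^2(I)$, as already announced above.

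First I would establish this compact embedding. Since $\sigma_n \geq 0$ for all $n$, every $\varphi \in \mathcal{D}_\sigma$ satisfies
\[
\|\varphi\|_{H^1(I)}^2 = \|\varphi'\|_{L^2(I)}^2 + \|\varphi\|_{L^2(I)}^2 \leq h^\sigma[\varphi] + \|\varphi\|_{L^2(I)}^2 = \|\varphi\|_{h^\sigma}^2 ,
\]
so $(\mathcal{D}_\sigma, \|\cdot\|_{h^\sigma})$ embeds continuously into $H^1(I)$. As $I=[0,L]$ is a bounded interval, the embedding $H^1(I) \hookrightarrow L^2(I)$ is compact by the Rellich--Kondrachov theorem; composing, the embedding $(\mathcal{D}_\sigma, \|\cdot\|_{h^\sigma}) \hookrightarrow L^2(I)$ is compact. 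This is insensitive to whether $\mathcal{D}_\sigma$ equals $H^1(I)$ or is a proper closed subspace: a sequence bounded in $\|\cdot\|_{h^\sigma}$ is bounded in $H^1(I)$, hence has an $L^2$-convergent subsequence.

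Next I would transfer this to the resolvent. By Theorem~\ref{TheoremForm} and the representation theorem, $H_\sigma$ is self-adjoint and nonnegative with $\mathrm{dom}(H_\sigma^{1/2}) = \mathcal{D}_\sigma$. For $f \in L^2(I)$ set $u := (H_\sigma+1)^{-1} f \in \mathcal{D}_\sigma$; then $\|u\|_{h^\sigma}^2 = h^\sigma[u] + \|u\|_{L^2(I)}^2 = \langle (H_\sigma+1)u, u\rangle = \langle f, u\rangle \leq \|f\|_{L^2(I)}^2$, so $(H_\sigma+1)^{-1}$ is bounded from $L^2(I)$ into $(\mathcal{D}_\sigma, \|\cdot\|_{h^\sigma})$. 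Composing with the compact embedding just obtained shows that $(H_\sigma+1)^{-1}$ is compact on $L^2(I)$, i.e.\ $H_\sigma$ has compact resolvent, and the claim follows.

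The argument is essentially routine; the one point needing a moment's care is that $\mathcal{D}_\sigma$ carries the form norm rather than the $H^1$-norm, so one must verify that form-bounded sets are $H^1$-bounded before invoking Rellich--Kondrachov — and this is exactly where the nonnegativity $\sigma_n \geq 0$ is used.
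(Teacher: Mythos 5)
Your proof is correct and follows exactly the route the paper takes: the paper simply asserts that $\mathcal{D}_\sigma$ is compactly embedded in $L^2(I)$ and deduces discreteness of the spectrum, while you supply the (routine but worthwhile) details — that nonnegativity of the $\sigma_n$ makes the form norm dominate the $H^1$-norm, that Rellich--Kondrachov then gives the compact embedding, and that this yields compactness of $(H_\sigma+1)^{-1}$. No gaps.
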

We shall denote the eigenvalues of $H_{\sigma}$ by 
\begin{equation*}
0 \leq \lambda_1(\sigma) \leq \lambda_2(\sigma) \leq \dots\ ,
\end{equation*}
counting them with multiplicity and we introduce the eigenvalue counting function
\begin{equation*}
N_{\sigma}(\lambda):=\{n\in \mathbb{N}: \lambda_n(\sigma) \leq \lambda\} \ .
\end{equation*}
\begin{theorem}[Weyl's law]\label{WeylsLaw} One has
\begin{equation}\label{WeylLaw}
\lim_{\lambda \rightarrow \infty}\frac{N_{\sigma}(\lambda)}{\sqrt{\lambda}}=\frac{L}{\pi}
\end{equation}
\end{theorem}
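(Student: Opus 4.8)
The plan is to deduce Weyl's law by a bracketing (Dirichlet–Neumann) argument comparing $H_\sigma$ with the pure Laplacian on $I$, exploiting the fact that the $\delta$-potentials are nonnegative.

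First I would record the variational characterization of eigenvalues via the min-max principle: $\lambda_n(\sigma) = \min_{\dim V = n} \max_{0 \neq \varphi \in V} h^\sigma[\varphi]/\|\varphi\|_{L^2(I)}^2$, where $V$ ranges over $n$-dimensional subspaces of $\mathcal{D}_\sigma$. Since $\sigma_n \geq 0$ for all $n$, we have $h^0[\varphi] \leq h^\sigma[\varphi]$ on $\mathcal{D}_\sigma$, and $h^0$ is just the Neumann form $\int_I |\varphi'|^2\,\mathrm{d}x$ on $H^1(I)$. This immediately gives the lower bound $\lambda_n(\sigma) \geq \lambda_n^N$, where $\lambda_n^N = (n-1)^2\pi^2/L^2$ are the Neumann eigenvalues of $-\mathrm{d}^2/\mathrm{d}x^2$ on $I$. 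Equivalently, $N_\sigma(\lambda) \leq N^N(\lambda)$, and since the Neumann counting function satisfies $N^N(\lambda)/\sqrt{\lambda} \to L/\pi$, this yields $\limsup_{\lambda\to\infty} N_\sigma(\lambda)/\sqrt{\lambda} \leq L/\pi$.

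For the matching lower bound on $N_\sigma(\lambda)$ — equivalently an upper bound on $\lambda_n(\sigma)$ — I would construct a good $n$-dimensional trial space. The natural choice is to fix some $N_0$ and use the interval $[\mv_{N_0}, L]$, which contains only the vertices $\mv_n$ with $n > N_0$; but a cleaner route is to pick a single subinterval $[a,b] \subset (0,L)$ that contains \emph{no} vertex $\mv_n$ at all — impossible in general since the $\mv_n$ may be dense, so instead I would choose, for any $\varepsilon > 0$, finitely many vertices $\mv_1,\dots,\mv_{N_0}$ and note that on $[\mv_{N_0}, L]$ the remaining coupling constants contribute, but we can bound their effect. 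Actually the robust argument is: for test functions supported in a subinterval $J$ and vanishing at the vertices lying in $\overline J$, the form $h^\sigma$ reduces to $\int_J |\varphi'|^2$. Taking $J$ between two consecutive vertices $\mv_n, \mv_{n+1}$ gives only finitely many dimensions, so instead one takes Dirichlet eigenfunctions of $-\mathrm{d}^2/\mathrm{d}x^2$ on all of $I$ that happen to vanish at every $\mv_n$ — too restrictive. The clean fix: use that $h^\sigma[\varphi] \le \int_I|\varphi'|^2 + \|\varphi\|_\infty^2 \sum_n \sigma_n$ is false when $\sigma \notin \ell^1$; so instead truncate — fix $m$, work on $[\mv_m, L]$ with Dirichlet conditions at $\mv_m$ and the vertices $\mv_{m+1}, \mv_{m+2}, \ldots$ still present, but now use trial functions that are Dirichlet eigenfunctions on the \emph{whole} interval $[\mv_m,L]$ ignored — this is circular.

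The correct and standard approach is Dirichlet bracketing from the other side: impose additional Dirichlet conditions at finitely many vertices $\mv_1, \dots, \mv_{N_0}$, decomposing $I$ into $N_0+1$ subintervals plus the tail; on the tail interval $[\mv_{N_0}, L]$ use that $h^\sigma \le h^{\sigma}$ trivially and compare instead with the operator where we additionally set all $\sigma_n = 0$ for $n > N_0$ — i.e., replace $\sigma$ by $\sigma^{(N_0)}$ which is finitely supported. Then $h^{\sigma^{(N_0)}}[\varphi] \le h^\sigma[\varphi]$ gives $N_{\sigma}(\lambda) \le N_{\sigma^{(N_0)}}(\lambda)$, wrong direction; but $H_{\sigma^{(N_0)}}$ differs from $-\mathrm d^2/\mathrm d x^2$ by a finite-rank-in-form-sense perturbation (finitely many $\delta$'s), hence is a relatively form-compact perturbation, so $N_{\sigma^{(N_0)}}(\lambda) = N^{N}(\lambda) + O(1)$ and in particular $N_{\sigma^{(N_0)}}(\lambda)/\sqrt\lambda \to L/\pi$. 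This gives $\liminf N_\sigma(\lambda)/\sqrt\lambda \ge$ nothing directly. So the genuine lower bound must come from trial functions: take the first $n$ Dirichlet eigenfunctions of the interval $[0,L]$ \emph{with Dirichlet conditions added at $\mv_1,\ldots,\mv_{N_0}$}; these are supported in the $N_0+1$ subintervals, and on each such subinterval there are still infinitely many $\mv_k$ unless that subinterval is $(\mv_j,\mv_{j+1})$ for some $j \le N_0$. Choosing $N_0$ so that $\mv_1 < \cdots < \mv_{N_0}$ and using the subintervals $(\mv_1,\mv_2),\ldots,(\mv_{N_0-1},\mv_{N_0})$ only — which are genuinely vertex-free — the Dirichlet Laplacian there has eigenvalue asymptotics with constant $(\mv_{N_0}-\mv_1)/\pi$. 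Letting $N_0 \to \infty$, $\mv_{N_0} \to L$ and $\mv_1$ fixed, we get $\liminf N_\sigma(\lambda)/\sqrt\lambda \ge (L-\mv_1)/\pi$; and since $\mv_1 = 0$ is the left endpoint, this already gives $L/\pi$. Hence the theorem.

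\begin{proof}[Proof sketch]
By the min-max principle and $\sigma_n \ge 0$, we have $\lambda_n(\sigma) \ge \lambda_n^N = (n-1)^2\pi^2/L^2$, the Neumann eigenvalues of $-\mathrm{d}^2/\mathrm{d}x^2$ on $I$; equivalently $N_\sigma(\lambda) \le N^N(\lambda)$, whence $\limsup_{\lambda\to\infty} N_\sigma(\lambda)/\sqrt\lambda \le L/\pi$.

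For the reverse inequality, fix $N_0 \ge 2$. The open intervals $J_j := (\mv_j,\mv_{j+1})$ for $j = 1,\dots,N_0-1$ contain no graph vertex. For $\varphi \in H^1_0(J_j)$ extended by zero to $I$ one has $\varphi \in \mathcal{D}_\sigma$ and $h^\sigma[\varphi] = \int_{J_j}|\varphi'|^2\,\mathrm{d}x$. Taking direct sums of Dirichlet eigenfunctions on the $J_j$'s produces, for every $\lambda$, at least $\sum_{j=1}^{N_0-1} N^D_{J_j}(\lambda)$ linearly independent trial functions in $\mathcal{D}_\sigma$ with Rayleigh quotient $\le \lambda$, where $N^D_{J_j}$ is the Dirichlet counting function on $J_j$. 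Hence
\begin{equation*}
N_\sigma(\lambda) \ge \sum_{j=1}^{N_0-1} N^D_{J_j}(\lambda), \qquad \text{so}\qquad \liminf_{\lambda\to\infty}\frac{N_\sigma(\lambda)}{\sqrt\lambda} \ge \sum_{j=1}^{N_0-1}\frac{|J_j|}{\pi} = \frac{\mv_{N_0}-\mv_1}{\pi}.
\end{equation*}
Since $\mv_1 = 0$ and $\mv_{N_0} \to L$ as $N_0 \to \infty$, the right-hand side tends to $L/\pi$. Combining the two bounds proves~\eqref{WeylLaw}.
\end{proof}

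The main obstacle is organizing the lower bound so that the \emph{infinitely many} couplings — which in general may even accumulate densely in $[0,L]$ — do not interfere; the point is that one never needs a vertex-free subinterval near $L$, only the telescoping sum $\sum_j |J_j| = \mv_{N_0} - \mv_1 \to L$, which sidesteps any pathology of the sequence $(\mv_n)$.
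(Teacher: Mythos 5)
Your proof is correct and follows essentially the same Dirichlet--Neumann bracketing strategy as the paper: the upper bound $N_\sigma(\lambda)\le N_{\sigma=0}(\lambda)$ from monotonicity of the forms, and the lower bound from Dirichlet trial functions supported on the vertex-free subintervals $(\mv_j,\mv_{j+1})$. The only (cosmetic) difference is that you truncate to the first $N_0-1$ subintervals and let $N_0\to\infty$ afterwards, whereas the paper evaluates the full Dirichlet counting function $N_{\sigma=\infty}(\lambda)=\sum_{n:\,\pi/\sqrt\lambda\le L_n}\lfloor\sqrt\lambda L_n/\pi\rfloor$ directly; your version sidesteps the limit--sum interchange that the paper leaves implicit.
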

\begin{proof} The statement follows from a suitable operator bracketing argument: More explicitly, one has
\begin{equation*}
N_{\sigma=\infty}(\lambda) \leq N_{\sigma}(\lambda) \leq N_{\sigma=0}(\lambda)\ ,
\end{equation*}
where $\sigma=\infty$ referes to Dirichlet boundary conditions in the vertices of the graph. It is well-known that $N_{\sigma=0}(\lambda)$ satisfies~\eqref{WeylLaw} since in this case we are simply left with an Laplacian on an interval of length $L$ and subject to Neumann boundary conditions. Regarding $N_{\sigma=\infty}(\lambda)$ we observe that, setting $L_n:=\mv_{n+1}-\mv_n$, 
\begin{equation*}
N_{\sigma=\infty}(\lambda) =\#\left\{(m,n): \frac{\pi^2 m^2}{L^2_n} \leq \lambda \right\}\ .
\end{equation*}
We can rewrite $N_{\sigma=\infty}(\lambda)$ as
\begin{equation*}
N_{\sigma=\infty}(\lambda)=\sum_{n: \frac{\pi}{\sqrt{\lambda}}\leq L_n} \bigg\lfloor \frac{\sqrt{\lambda} L_n}{\pi} \bigg\rfloor
\end{equation*}
and therefore 
\begin{equation*}
\lim_{\lambda \rightarrow \infty}\frac{N_{\sigma=\infty}(\lambda)}{\sqrt{\lambda}}=\frac{L}{\pi}\ . \qedhere
\end{equation*}
\end{proof}
\begin{remark} Theorem~\ref{WeylsLaw} establishes the standard Weyl law one knows from the finite setting and this has to do with the fact that the infinite graph of consideration has total finite length. On the other hand, as will become clear in Section~\ref{MainResults}, Weyl's law plays a central role in establishing the main spectral comparison result. At this point, let us also already refer to the paper \cite{EggerSteinerInfinite} where the authors also study an infinite path graph but with infinite total length (see also \cite{AKM} for related models). In this paper they show that, although the spectrum of the Hamiltonian is still purely discrete, one has non-standard Weyl asymptotics for the eigenvalue counting function (strictly speaking, they show this for Dirichlet boundary conditions in the vertices). Therefore, some things are likely to change for general infinite graphs as elaborated on in Section~\ref{ModifiedWeyl} in more detail.
\end{remark}
\section{Main results I: The infinite path graph of finite length}\label{MainResults}
As described above, the main goal of this paper is to compare the spectrum of two different Schrödinger operators on a given (infinite) graph $\mathcal{G}$ in a suitable sense. More explicitly, we are interested in the mean value of eigenvalue distances. Hence, for the infinite path graph of finite length, we aim to study the quantity
\begin{equation}\label{EigenvalueDifferences}
    \frac{1}{N}\sum_{n=1}^{N}\left(\lambda_n(\sigma)-\lambda_n(0)\right)\ 
\end{equation}
in the limit $N \rightarrow \infty$. Note that a related quantity was investigated in \cite{RWY} for domains in $\mathbb{R}^2$ and in \cite{RR,BSSS,BifulcoKerner} for finite quantum graphs. 

On an infinite graph $\mathcal{G}$ and contrary to the finite setting, it is not clear that~\eqref{EigenvalueDifferences} remains finite in the limit of $N \rightarrow \infty$. This is indeed supported by the following statement. 
\begin{theorem}\label{MainResult0} Consider the operator $H_{\sigma}$ with $(\sigma_n)_{n \in \mathbb{N}} \notin \ell^1(\mathbb{N})$. Then
\begin{equation*}
\lim_{N \rightarrow \infty}\frac{1}{N}\sum_{n=1}^{N}\left(\lambda_n(\sigma)-\lambda_n(0)\right)=\infty\ .
\end{equation*}
\end{theorem}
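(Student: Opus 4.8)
The plan is to bound the Cesàro mean in question from below by the corresponding quantity for a \emph{finite} quantum graph, to which the comparison results of \cite{BSSS,BifulcoKerner} apply, and then to let the size of the approximating finite graph grow. For $M\in\mathbb N$ put $\sigma^{(M)}:=(\sigma_1,\dots,\sigma_M,0,0,\dots)$. Since a degree-two vertex carrying a $\delta$-coupling of strength $0$ imposes nothing beyond continuity of the function and of its derivative, such a vertex may simply be deleted; hence $H_{\sigma^{(M)}}$ is nothing but a Schrödinger operator on the finite path graph with vertex set $\{0=\mv_1<\mv_2<\dots<\mv_M<L\}$, carrying a $\delta$-coupling of strength $\sigma_n$ at the interior vertex $\mv_n$ for $2\le n\le M$, a Robin condition with parameter $\sigma_1$ at $0$, and a Neumann condition at $L$. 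In particular the finite-graph spectral comparison results apply to the pair $(H_{\sigma^{(M)}},H_0)$.

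The first step is the elementary observation that $\lambda_n(0)\le\lambda_n(\sigma^{(M)})\le\lambda_n(\sigma)$ for all $n$ and $M$: this follows from the min--max principle, since $0\le h^{\sigma^{(M)}}[\varphi]\le h^{\sigma}[\varphi]$ for all $\varphi\in\mathcal D_{\sigma}$ (recall $\sigma_n\ge0$). Hence, for every fixed $M$,
\[
\frac1N\sum_{n=1}^N\bigl(\lambda_n(\sigma)-\lambda_n(0)\bigr)\ \ge\ \frac1N\sum_{n=1}^N\bigl(\lambda_n(\sigma^{(M)})-\lambda_n(0)\bigr),\qquad N\in\mathbb N.
\]
The second step is to apply the finite-graph comparison theorem of \cite{BSSS,BifulcoKerner} to the right-hand side: it yields that the limit as $N\to\infty$ exists and equals a finite constant $C_M$, and the explicit expression for $C_M$ — which reflects the equidistribution of eigenfunctions, i.e.\ the local Weyl law on the interval $[0,L]$ — satisfies $C_M\ge\frac1L\sum_{n=2}^M\sigma_n$ (each interior vertex $\mv_n$ contributes $\sigma_n$ times the Cesàro average $1/L$ of the eigenfunction values squared at $\mv_n$). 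Taking $\liminf_{N\to\infty}$ in the displayed inequality then gives $\liminf_{N\to\infty}\frac1N\sum_{n=1}^N(\lambda_n(\sigma)-\lambda_n(0))\ge\frac1L\sum_{n=2}^M\sigma_n$ for every $M$.

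It remains to let $M\to\infty$: since $\sigma_n\ge0$ and $(\sigma_n)\notin\ell^1(\mathbb N)$, the partial sums $\sum_{n=2}^M\sigma_n$ increase to $+\infty$, while the left-hand side of the last inequality is independent of $M$; therefore that $\liminf$ equals $+\infty$, and as the summands are non-negative the limit exists in $[0,\infty]$ and equals $+\infty$. The point requiring care is the interface with \cite{BSSS,BifulcoKerner} set up in the first two paragraphs: one has to check that those references genuinely cover $\delta$-couplings (together with a single Robin boundary vertex) on a finite path graph, and that the constant $C_M$ they furnish depends on $(\sigma_1,\dots,\sigma_M)$ through a combination bounded below by a fixed positive multiple of $\sum_{n=1}^M\sigma_n$ — this being where the local Weyl law is used. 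The rest of the argument, namely the monotonicity from min--max and the divergence of the partial sums of a non-summable non-negative series, is soft.
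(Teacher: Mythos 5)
Your proposal is correct and takes essentially the same route as the paper's own proof: truncate the coupling sequence to $\sigma^{(M)}$, invoke the finite-graph spectral comparison result of \cite{BifulcoKerner,BSSS} to identify the limiting Ces\`aro mean for $H_{\sigma^{(M)}}$, use monotonicity of the forms (bracketing/min--max) to bound the quantity for $H_{\sigma}$ from below, and let $M\to\infty$ using $(\sigma_n)\notin\ell^1(\mathbb{N})$. Your write-up is in fact slightly more explicit than the paper's about the $\liminf$ bookkeeping and about why the finite-graph theorem applies to a path with one Robin endpoint, but the underlying argument is identical.
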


\begin{proof} We introduce the cut-off sequence $\sigma_M:=\left(\hat{\sigma}_n\right)_{n \in \mathbb{N}}$ via
\begin{equation*}
\hat{\sigma}_n:=\begin{cases} \sigma_n \quad \text{for}\quad 1\leq n\leq M\ , \\
0 \quad \text{else}\ .
\end{cases}
\end{equation*}
Then, \cite[Theorem~1]{BifulcoKerner} (resp.\ \cite[Theorem~1.3]{BSSS}) implies that 
\begin{equation*}
\lim_{N \rightarrow \infty}\frac{1}{N}\sum_{n=1}^{N}\left(\lambda_n(\sigma_M)-\lambda_n(0)\right)=\frac{2\sigma_1}{L}+\frac{\sum_{n=2}^{\infty}\hat{\sigma}_n}{L}\ .
\end{equation*}
On the other hand, by a straightforward operator bracketing argument, one also has that the sequence
\begin{equation*}
    \left( \frac{1}{N}\sum_{n=1}^{N}\left(\lambda_n(\sigma_M)-\lambda_n(0)\right) \right)_{M \in \mathbb{N}}
\end{equation*}
is increasing and hence we conclude the statement.
\end{proof}
Of course, now it remains to check the case where $(\sigma_n)_{n \in \mathbb{N}} \in \ell^1(\mathbb{N})$. We obtain the following result whose proof is given in the subsequent section.
\begin{theorem}\label{MainResult} Consider the operator $H_{\sigma}$ with $(\sigma_n)_{n \in \mathbb{N}} \in \ell^1(\mathbb{N})$. Then
\begin{equation*}
\lim_{N \rightarrow \infty}\frac{1}{N}\sum_{n=1}^{N}\left(\lambda_n(\sigma)-\lambda_n(0)\right)=\frac{2\sigma_1}{L}+\frac{\sum_{n=2}^{\infty}\sigma_n}{L}
\end{equation*}
\end{theorem}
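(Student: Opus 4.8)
The natural strategy is to combine the cut-off approximation already used in the proof of Theorem~\ref{MainResult0} with a uniform error estimate that survives the limit $M\to\infty$. Recall that for the finite (truncated) coupling $\sigma_M$ we know from \cite[Theorem~1]{BifulcoKerner} that
\begin{equation*}
\lim_{N\to\infty}\frac{1}{N}\sum_{n=1}^N\bigl(\lambda_n(\sigma_M)-\lambda_n(0)\bigr)=\frac{2\sigma_1}{L}+\frac{1}{L}\sum_{n=2}^{M}\sigma_n\ ,
\end{equation*}
so the right-hand side converges to the claimed value as $M\to\infty$ since $\sigma\in\ell^1(\mathbb N)$. Hence it suffices to show that the Cesàro means for $\sigma$ and for $\sigma_M$ are uniformly close, i.e.\ that
\begin{equation*}
\limsup_{N\to\infty}\Bigl|\frac{1}{N}\sum_{n=1}^N\bigl(\lambda_n(\sigma)-\lambda_n(\sigma_M)\bigr)\Bigr|\xrightarrow{M\to\infty}0\ ,
\end{equation*}
and then take a diagonal limit. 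First I would record the elementary two-sided bound coming from min-max: since $h^{\sigma_M}\le h^{\sigma}$ as forms, one has $\lambda_n(\sigma_M)\le\lambda_n(\sigma)$ for every $n$, so the difference above is nonnegative and we only need an upper bound.

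For the upper bound the plan is to control $\lambda_n(\sigma)-\lambda_n(\sigma_M)$ by the size of the tail $\sum_{k>M}\sigma_k$ alone, uniformly in $n$. The key analytic input is the Sobolev trace bound on an interval: for $\varphi\in H^1(I)$ and any point $x_0$, $|\varphi(x_0)|^2\le \varepsilon\|\varphi'\|_{L^2(I)}^2+C_\varepsilon\|\varphi\|_{L^2(I)}^2$, with $C_\varepsilon$ depending only on $\varepsilon$ and $L$. Feeding this into the perturbing term $\sum_{k>M}\sigma_k|\varphi(\mv_k)|^2$ shows that, as a form, $h^\sigma\le (1+\varepsilon\,t_M)\,h^{\sigma_M}+C_\varepsilon t_M\|\cdot\|_{L^2}^2$ where $t_M:=\sum_{k>M}\sigma_k$. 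By the min-max principle this yields
\begin{equation*}
\lambda_n(\sigma)\le(1+\varepsilon t_M)\,\lambda_n(\sigma_M)+C_\varepsilon t_M\ ,
\end{equation*}
hence $\lambda_n(\sigma)-\lambda_n(\sigma_M)\le \varepsilon t_M\,\lambda_n(\sigma_M)+C_\varepsilon t_M$. Summing over $n\le N$, dividing by $N$, and using Weyl's law (Theorem~\ref{WeylsLaw}) — which gives $\lambda_n(\sigma_M)\sim(\pi n/L)^2$, in particular $\frac1N\sum_{n\le N}\lambda_n(\sigma_M)$ grows like $N^2$ — we would get a bound of the form $C\,\varepsilon\,t_M\,N^2$. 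That last blow-up in $N$ is the main obstacle: the crude multiplicative estimate is too lossy, because the relative perturbation of large eigenvalues is not what matters for the Cesàro mean.

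To fix this I would instead exploit that the perturbation has a fixed, finite "rank-type" effect on the eigenvalue shifts. The cleanest route is via the heat trace / resolvent: interlacing for $\delta$-couplings gives $\lambda_n(\sigma_M)\le\lambda_n(\sigma)\le\lambda_{n+J_M}(\sigma_M)$ is \emph{not} available since all $\sigma_k>0$ are genuine perturbations, but a better tool is the integrated identity behind \cite[Theorem~1]{BifulcoKerner}: the Cesàro limit equals $\lim_{t\to0}\bigl(\text{something involving }\Tr(\mathrm e^{-tH_0}-\mathrm e^{-tH_\sigma})\bigr)$, and the heat-trace difference is controlled by $\sum_k\sigma_k\,p_t(\mv_k,\mv_k)$ with $p_t$ the free heat kernel on $I$, which is $\le C t^{-1/2}\sum_k\sigma_k$ uniformly. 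Concretely, I would prove directly that
\begin{equation*}
\sum_{n=1}^{\infty}\bigl(\mathrm e^{-t\lambda_n(\sigma_M)}-\mathrm e^{-t\lambda_n(\sigma)}\bigr)\le t\sum_{k>M}\sigma_k\sup_{x}p_t(x,x)\le \frac{C}{\sqrt t}\sum_{k>M}\sigma_k\ ,
\end{equation*}
using the Duhamel/DuCLET (perturbation) formula for the difference of heat semigroups and the explicit Neumann heat kernel bound on $I$. Combined with the representation of the Cesàro mean as the $t\to0$ asymptotic coefficient of the heat trace (Karamata/Tauberian, as in the cited finite-graph proofs), this gives
\begin{equation*}
\Bigl|\lim_{N\to\infty}\frac1N\sum_{n=1}^N\bigl(\lambda_n(\sigma)-\lambda_n(\sigma_M)\bigr)\Bigr|\le \frac{2}{L}\sum_{k>M}\sigma_k\ ,
\end{equation*}
which tends to $0$ as $M\to\infty$. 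Putting the three pieces together — the exact finite-$M$ formula, its convergence as $M\to\infty$ by $\ell^1$-summability, and the uniform tail estimate — yields the claim. The delicate point throughout is justifying the interchange of the $N\to\infty$ and $M\to\infty$ limits, which is exactly what the uniform-in-$M$ tail bound is designed to license; everything else (trace-class bounds for $\mathrm e^{-tH_0}-\mathrm e^{-tH_\sigma}$, the explicit Neumann kernel on $[0,L]$, Karamata's theorem) is standard and can be imported as in \cite{BifulcoKerner,BSSS}.
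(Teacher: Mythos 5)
Your strategy is viable but genuinely different from the paper's. The paper does not truncate $\sigma$ at all in this proof: it writes the Ces\`aro mean via the Hellmann--Feynman formula as $\int_0^1\sum_m\sigma_m\frac1N\sum_{n\le N}|f_n^{\tau\sigma}(\mv_m)|^2\,\mathrm{d}\tau$, proves the uniform bound $\frac1N\sum_{n\le N}|f_n^{\tau\sigma}(x)|^2\le C$ by heat-kernel bracketing (Lemma~\ref{Lemma1}), and then evaluates each inner limit by the local Weyl law (Lemma~\ref{LocalWeyl}), the $\ell^1$ hypothesis entering only through dominated convergence in $m$. You instead use the finite-graph theorem of \cite{BifulcoKerner} as a black box for the truncations $\sigma_M$ and control the $M$-tail uniformly in $N$ through the heat-trace difference. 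This buys you independence from the local Weyl law (the vertex degrees enter only through the exact finite-$M$ formula), at the price of needing the Duhamel/trace-class machinery for $\delta$-perturbations; your route is closer in spirit to the paper's proof of Theorem~\ref{MainResult0} than to its proof of Theorem~\ref{MainResult}.

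Two points in your sketch need repair, though both are fixable. First, the last inequality in your key display is off by a power of $t$: from $t\sum_{k>M}\sigma_k\sup_x p_t(x,x)$ and $\sup_x p_t(x,x)\le Ct^{-1/2}$ you obtain $C\sqrt{t}\,\sum_{k>M}\sigma_k$, not $Ct^{-1/2}\sum_{k>M}\sigma_k$; fortunately $C\sqrt{t}$ is exactly the order you need. Second, Karamata's theorem converts asymptotic equivalences into asymptotic equivalences, whereas here you only have a one-sided \emph{upper bound} on the heat-trace difference; to deduce $\limsup_N\frac1N\sum_{n\le N}(\lambda_n(\sigma)-\lambda_n(\sigma_M))\le C'\sum_{k>M}\sigma_k$ you should instead use the elementary Chebyshev step $\sum_{\lambda_n(\sigma)\le\lambda}d_n\le \mathrm{e}^{t\lambda}\sum_n d_n \mathrm{e}^{-t\lambda_n(\sigma)}$ with $d_n:=\lambda_n(\sigma)-\lambda_n(\sigma_M)\ge 0$ and $t=1/\lambda$, combined with $\mathrm{e}^{-t\lambda_n(\sigma_M)}-\mathrm{e}^{-t\lambda_n(\sigma)}\ge t\,d_n\,\mathrm{e}^{-t\lambda_n(\sigma)}$ and Weyl's law. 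This loses the precise constant $2/L$ you announce for the tail estimate, but any universal constant in front of $\sum_{k>M}\sigma_k$ suffices for the $M\to\infty$ limit, so the sandwich between $\lim_N\frac1N\sum(\lambda_n(\sigma_M)-\lambda_n(0))$ from below and that quantity plus the tail from above still closes the argument.
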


\section{Proof of Theorem~\ref{MainResult}}\label{ProofMainResult}

In principle, the proof follows the same path as the corresponding proof in the finite setting. In a first step, we use an explicit expression for the derivative of the eigenvalue curve $\lambda_n(\tau \sigma)$, $\tau \in [0,1]$. More explicitly, we obtain
\begin{equation}\label{EquationOneProof}\begin{split}
    \frac{1}{N}\sum_{n=1}^{N}\left(\lambda_n(\sigma)-\lambda_n(0)\right)&=\frac{1}{N}\sum_{n=1}^{N} \int_{0}^{1}\frac{\mathrm{d}\lambda_n(\tau \sigma)}{\mathrm{d}\tau}  \mathrm{d}\tau \ ,\\
    &=\frac{1}{N}\sum_{n=1}^{N} \int_{0}^{1}\left(\sum_{m=1}^{\infty}\sigma_m |f^{\tau \sigma}_n(\mv_m)|^2\right)\mathrm{d}\tau\ , \\
    &= \int_{0}^{1}\left(\sum_{m=1}^{\infty}\sigma_m \frac{1}{N}\sum_{n=1}^{N}|f^{\tau \sigma}_n(\mv_m)|^2\right)\ \mathrm{d}\tau \ ,
    \end{split}
\end{equation}
where $f^{\tau \sigma}_n \in H^1(I)$ is the normalized $n$-th eigenfunction to the eigenvalue $\lambda_n(\tau \sigma)$.
In order to pull the limit $N \rightarrow \infty$ into the integral, we make use of
\begin{lemma}\label{Lemma1}  There exists a constant $C > 0$ independent of $x \in \mathcal{G}$ and $\tau \in [0,1]$ such that 
\begin{equation*}
    \frac{1}{N}\sum_{n=1}^{N}|f^{\tau \sigma}_n(x)|^2 \leq C\ .
\end{equation*}
\end{lemma}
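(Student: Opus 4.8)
The plan is to deduce the uniform bound from a pointwise upper estimate for the heat kernel of $H_{\tau\sigma}$ on the diagonal, combined with a Tauberian-type optimisation and an $N$-independent upper bound for $\lambda_N(\tau\sigma)$; this will make the bound uniform in $x\in\mathcal{G}$, $\tau\in[0,1]$ \emph{and} $N$, which is what the dominated convergence step after \eqref{EquationOneProof} requires. Note that we are in the situation $(\sigma_n)_{n\in\NN}\in\ell^1(\NN)$, so $\mathcal{D}_\tau=H^1(I)$ for every $\tau\in[0,1]$ and all the forms $h^{\tau\sigma}$ share the same domain; a related local Weyl estimate also underlies the finite-graph results of \cite{BSSS,BifulcoKerner}.

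\textbf{Step 1 (heat kernel domination).} Let $p^{\tau\sigma}_t(x,y)$ denote the (continuous) integral kernel of $\euler^{-tH_{\tau\sigma}}$ and $p^0_t(x,y)$ that of the Neumann Laplacian $H_0=H_{\sigma=0}$ on $[0,L]$. I would first prove
\[
0\le p^{\tau\sigma}_t(x,x)\le p^0_t(x,x)\qquad\text{for all }t>0,\ x\in\mathcal{G},\ \tau\in[0,1].
\]
On $H^1(I)$ one has $h^{\tau\sigma}[\varphi]=h^{0}[\varphi]+\int_{[0,L]}|\varphi|^2\,\mathrm{d}\mu_\tau$ with $\mu_\tau:=\tau\sum_{n}\sigma_n\delta_{\mv_n}$ a non-negative finite measure which, via $H^1(I)\hookrightarrow L^\infty(I)$, is infinitesimally form-bounded with respect to $-\mathrm{d}^2/\mathrm{d}x^2$. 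Hence $H_{\tau\sigma}$ is a perturbation of the Dirichlet form $h^{0}$ by a non-negative measure, and the displayed inequality is the standard domination principle for such perturbations (equivalently it follows from a Feynman--Kac representation, the $\delta$-couplings contributing a factor $\euler^{-\sum_n\tau\sigma_n\ell^{\mv_n}_t}\le 1$ with $\ell^{\mv_n}_t$ the local times of reflected Brownian motion; it may also be obtained by approximating each $\delta_{\mv_n}$ by a non-negative bounded potential and passing to the limit). The plain-interval kernel is controlled by the method of images: $p^0_t(x,x)\le C_L\,t^{-1/2}$ for all $x\in[0,L]$ and $0<t\le 1$.

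\textbf{Step 2 (from the heat trace to partial sums, and conclusion).} Using the spectral expansion $p^{\tau\sigma}_t(x,x)=\sum_{n\ge1}\euler^{-t\lambda_n(\tau\sigma)}|f^{\tau\sigma}_n(x)|^2$ together with $\lambda_n(\tau\sigma)\le\lambda_N(\tau\sigma)$ for $n\le N$, one gets for $0<t\le 1$
\[
\euler^{-t\lambda_N(\tau\sigma)}\sum_{n=1}^{N}|f^{\tau\sigma}_n(x)|^2\le p^{\tau\sigma}_t(x,x)\le C_L\,t^{-1/2},
\]
hence $\sum_{n=1}^{N}|f^{\tau\sigma}_n(x)|^2\le C_L\,t^{-1/2}\euler^{t\lambda_N(\tau\sigma)}$, and choosing $t=\min\{1,\lambda_N(\tau\sigma)^{-1}\}$ yields
\[
\sum_{n=1}^{N}|f^{\tau\sigma}_n(x)|^2\le C_L'\bigl(1+\sqrt{\lambda_N(\tau\sigma)}\bigr)
\]
with $C_L'$ independent of $x$, $\tau$, $N$. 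It then remains to bound $\lambda_N(\tau\sigma)$ uniformly in $\tau$: by the min--max principle, testing on the subspace $\mathcal{D}(h^{\sigma=\infty})\subset H^1(I)$ of functions vanishing at all vertices $\mv_n$, on which $h^{\tau\sigma}$ coincides with $h^{\sigma=\infty}$, one has $\lambda_N(\tau\sigma)\le\lambda_N(\sigma=\infty)$; and from $N_{\sigma=\infty}(\lambda)\sim\tfrac{L}{\pi}\sqrt{\lambda}$ (shown in the proof of Theorem~\ref{WeylsLaw}) one gets $N_{\sigma=\infty}(\lambda)\ge\tfrac{L}{2\pi}\sqrt{\lambda}$ for large $\lambda$, whence $\lambda_N(\sigma=\infty)\le c_L\,N^2$ for all $N\ge 1$ (the finitely many small $N$ being harmless). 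Combining,
\[
\frac1N\sum_{n=1}^{N}|f^{\tau\sigma}_n(x)|^2\le\frac{C_L'\,(1+\sqrt{c_L}\,N)}{N}\le C
\]
uniformly in $x\in\mathcal{G}$ and $\tau\in[0,1]$, as claimed.

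\textbf{Main obstacle.} The delicate point is Step 1, i.e.\ making the heat-kernel domination $p^{\tau\sigma}_t\le p^{0}_t$ rigorous for the $\delta$-coupling operator. It is morally evident that the repulsive ($\sigma_n\ge 0$) $\delta$-interactions can only decrease the heat kernel, but a clean argument requires either the domination theory for perturbations of Dirichlet forms by non-negative measures or a careful regularisation combined with Feynman--Kac; once Step 1 is available, the remaining steps are routine.
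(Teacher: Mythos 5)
Your argument is essentially the paper's own proof: the partial sum is controlled by the on-diagonal heat kernel at time $t\sim 1/\lambda_N(\infty)$, the kernel is dominated by the Neumann heat kernel of the interval (the upper bound in \eqref{BracketingI}, which the paper justifies by citing \cite{Ouh05} and \cite[Appendix~A]{BifulcoKerner} rather than Feynman--Kac), and Weyl's law converts $\sqrt{\lambda_N(\infty)}$ into $C\cdot N$. Your Step 1, which you flag as the main obstacle, is exactly the point the paper delegates to those references, so the proposal is correct and follows the same route.
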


\begin{proof}[Proof of Lemma~\ref{Lemma1}] Similar as in \cite[(22)]{BifulcoKerner}, for $x \in \mathcal{G}$, we first estimate
\begin{equation*}\begin{split}
    \sum_{n=1}^N |f^{\tau \sigma}_n(x)|^2 & \leq \sum_{n=1}^N \mathrm{e}^{1-\frac{\lambda_n(\sigma)}{\lambda_N(\infty)}}\vert f_n^{\tau \sigma}(x) \vert^2\\ &= \mathrm{e} \sum_{n=1}^N \mathrm{e}^{-\frac{\lambda_n(\sigma)}{\lambda_N(\infty)}} \vert f_n^{\tau \sigma}(x) \vert^2 \nonumber \\&\leq \mathrm{e} \cdot  p_{\tau \sigma}\bigg(\frac{1}{\lambda_N(\infty)};x,x\bigg)\ ,
    \end{split}
\end{equation*}
	where we made use of the following relation for the so-called \emph{heat kernel} associated with the Schrödinger operator $H_{\tau \sigma}$,
	\begin{equation}\label{RelationXXX}
	p_{\tau \sigma}(t;x,x) = \sum_{n=1}^\infty \mathrm{e}^{-t\lambda_n(\tau \sigma)} \vert f_n^{\tau \sigma}(x)\vert^2\ .
	\end{equation}
 We then use the inequality (using results from \cite{Ouh05}, cf. also \ \cite[Appendix~A]{BifulcoKerner})
\begin{equation}\label{BracketingI}
p_{\sigma=(\infty, \tau\sigma_{\mv}, \infty)}(t;x,x) \leq p_{\tau \sigma}(t;x,x) \leq p_{\sigma=0}(t;x,x)\ , \quad x \in \mathcal{G}\ , 
\end{equation}
where $p_{\sigma=(\infty, \tau\sigma_{\mv}, \infty)}(t;x,x)$ denotes the heat kernel corresponding to the Schrödinger operator $H_{\sigma=(\infty, \tau\sigma_{\mv}, \infty)}$ where each $\delta$-coupling condition is replaced by a Dirichlet condition except for the vertex $\mv \in \mV$ with $\mathrm{dist}(x, \mv) = \min_{n \in \mathbb{N}} \mathrm{dist}(x, \mv_n)$. Another interesting aspect about this inequality is that $ p_{\sigma=0}(t;\cdot,\cdot)$ is simply the heat kernel of the Laplacian on a finite interval of length $L > 0$ and subject to Neumann boundary conditions which implies, in particular, that its properties are well-known (see, for instance, \cite{BER}). Similarly, for an $x \in \mathcal{G}$ with $x \neq \mv_m$ for all $m \in \mathbb N$ (that is, $x$ is not a vertex of $\mathcal{G}$), $p_{\sigma=\infty}(t;x,x)$ is nothing else than the heat kernel of a Dirichlet Laplacian on the associated (maybe small) interval and hence its properties are also well-known (one should also take into account that the Dirichlet heat kernel $p_{\sigma = \infty}(t;\cdot,\cdot)$ is equal to zero in the vertices).

Now, starting with~\eqref{BracketingI}, we conclude existence of a constant $C > 0$ such that, also employing Theorem~\ref{WeylsLaw}, 

 %
 %
 %
  \begin{equation*}
    p_{\tau \sigma}\bigg(\frac{1}{\lambda_N(\infty)};x,x\bigg) \leq C \cdot N\ ,
 \end{equation*}
 uniformly in $x \in \mathcal{G}$. From this we immediately obtain the statement.
\end{proof}
\begin{remark} Compared to Lemma~\ref{Lemma1}, we proved something stronger in \cite{BifulcoKerner} and in the finite setting. Namely, we showed that the eigenfunctions of the considered Schrödinger operator on a finite graph are uniformly bounded! On an infinite graph $\mathcal{G}=(\mV_{\mathcal{G}},\mE_{\mathcal{G}})$ with $\inf_{\me \in \mE}  \ell_\me=0$ (where $\ell_\me > 0$ denotes the edge length of $\me \in \mE_{\mathcal{G}}$) this is not necessarily the case. For example, on an interval $\hat{I}$ with length $\hat{L} > 0$ and Dirichlet boundary conditions, any normalized eigenfunction of the Laplacian has supremum $\sqrt{\frac{2}{\hat{L}}}$. So, this supremum diverges for $\hat{L} \rightarrow 0$.
\end{remark}
Going back to~\eqref{EquationOneProof}, Lemma~\ref{Lemma1} in combination with Tonelli's theorem gives
\begin{equation*}\begin{split}
  \lim_{N \rightarrow \infty}  \frac{1}{N}\sum_{n=1}^{N}\left(\lambda_n(\sigma)-\lambda_n(0)\right)&=\sum_{m=1}^{\infty}\sigma_m\int_{0}^{1} \left(\lim_{N \rightarrow \infty} \frac{1}{N}\sum_{n=1}^{N}|f^{\tau \sigma}_n(\mv_m)|^2\right)\ \mathrm{d}\tau \ ,
    \end{split}
\end{equation*}
and hence the proof is complete by taking into account another auxiliary result which is interesting in its own right.
\begin{lemma}[Local Weyl law]\label{LocalWeyl} For arbitrary $\sigma=(\sigma_n)_{n \in \mathbb{N}} \subset [0,\infty)$ one has 
\begin{equation*}
    \lim_{N \rightarrow \infty} \frac{1}{N}\sum_{n=1}^{N}|f^{\sigma}_n(x)|^2=\frac{2}{L \deg(x)}\ ,
\end{equation*}
for all $x \in [0,L)$, where $\deg(x)=2$ whenever $x \neq 0$ and $\deg(x) = 1$ else.
\end{lemma}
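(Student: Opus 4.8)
\textbf{Proof proposal for Lemma~\ref{LocalWeyl}.}

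The plan is to compute the local Weyl asymptotics through the heat kernel, exactly as in the argument behind Lemma~\ref{Lemma1}, but now extracting the leading-order coefficient rather than just a bound. First I would fix $x \in [0,L)$ and observe that, by Karamata's Tauberian theorem, the quantity $\frac{1}{N}\sum_{n=1}^N |f_n^\sigma(x)|^2$ has the same limit (if it exists) as $\sqrt{\lambda}$ times the appropriately normalized partial sums of $\mathrm{e}^{-t\lambda_n(\sigma)}|f_n^\sigma(x)|^2$ as $t \downarrow 0$; more precisely, combining the spectral representation~\eqref{RelationXXX} of the on-diagonal heat kernel with Weyl's law (Theorem~\ref{WeylsLaw}), which gives $\lambda_n(\sigma) \sim \pi^2 n^2 / L^2$, the statement is equivalent to the small-time asymptotics
\begin{equation*}
    p_\sigma(t;x,x) = \frac{1}{\sqrt{4\pi t}}\cdot \frac{2}{\deg(x)} + o\!\left(t^{-1/2}\right), \qquad t \downarrow 0,
\end{equation*}
for $x$ not a vertex ($\deg(x)=2$, leading term $(4\pi t)^{-1/2}$) and for $x = \mv_1 = 0$ ($\deg(x)=1$, leading term $\tfrac12(4\pi t)^{-1/2}$). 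Indeed, if this holds, then $\sum_{n=1}^N|f_n^\sigma(x)|^2 \sim \frac{2}{\deg(x)}\cdot\frac{L}{\pi}\sqrt{\lambda_N}\sim \frac{2}{\deg(x)}\cdot N$ via the Tauberian theorem, which is the claim.

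The key step is then to establish that on-diagonal small-time asymptotics, and here I would use the two-sided bracketing~\eqref{BracketingI} together with the principle of not feeling the boundary. For the upper bound, $p_\sigma(t;x,x) \le p_{\sigma=0}(t;x,x)$, the Neumann heat kernel on $[0,L]$: at an interior point this is $\frac{1}{\sqrt{4\pi t}}(1 + o(1))$, while at the endpoint $x=0$ it is $\frac{2}{\sqrt{4\pi t}}(1+o(1))$ by the method of images — but at $x=0$ the vertex $\mv_1$ carries a genuine $\delta$-coupling with $\sigma_1 \ge 0$, so the comparison operator in~\eqref{BracketingI} is $H_{\sigma=(\infty,\tau\sigma_{\mv},\infty)}$ with the single retained vertex being $\mv_1$ itself; I need the heat kernel of $-d^2/dx^2 + \sigma_1\delta$ on a short interval with a Dirichlet condition at the far end, whose on-diagonal value at $0$ behaves like $\frac{1}{\sqrt{4\pi t}}(1+o(1))$ (the $\delta$-term and the far Dirichlet condition are exponentially negligible at short times, and a half-line $\delta$ potential contributes only lower order to the diagonal at the coupling point). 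For the lower bound, $p_\sigma(t;x,x)\ge p_{\sigma=(\infty,\tau\sigma_{\mv},\infty)}(t;x,x)$: for interior $x$ with $x\ne \mv_m$ for all $m$, this is, for $t$ small enough, the Dirichlet heat kernel on the open subinterval containing $x$ (between two consecutive vertices), which at an interior point is again $\frac{1}{\sqrt{4\pi t}}(1+o(1))$ by locality; squeezing gives $p_\sigma(t;x,x) = (4\pi t)^{-1/2}(1+o(1))$. At $x=0$ the lower comparison operator retains $\mv_1$ with coupling $\sigma_1$ and puts Dirichlet conditions at all $\mv_n$, $n\ge 2$; its on-diagonal heat kernel at $0$ is that of $-d^2/dx^2+\sigma_1\delta$ on $[0,\mv_2]$ with a Dirichlet condition at $\mv_2$ and a Neumann-type (actually $\delta$-coupling, so effectively Neumann to leading order) condition at $0$, which again is $\frac{2}{\sqrt{4\pi t}}(1+o(1))$; squeezing finishes the endpoint case.

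The main obstacle I anticipate is \emph{uniformity and the interchange of limits}: the Tauberian passage requires knowing the small-$t$ asymptotics of $p_\sigma(t;x,x)$, but one must be careful that the "not feeling the boundary" estimates are applied at a fixed $x$ with error terms that vanish as $t\downarrow 0$ — this is fine pointwise in $x$ since for each fixed non-vertex $x$ there is a definite positive distance to the nearest vertex, so the exponentially small corrections $\mathrm{e}^{-\dist(x,\mv)^2/ct}$ are harmless; the one genuinely delicate point is the endpoint $x=0$, where the retained $\delta$-coupling $\sigma_1\delta$ sits exactly at the evaluation point, so I must verify that a point interaction at the diagonal point perturbs the on-diagonal heat kernel only at order $o(t^{-1/2})$ (indeed at order $O(1)$, via the explicit resolvent formula for the $\delta$-interaction, or by a Duhamel expansion whose first correction is $-\sigma_1\int_0^t p_0(s;0,0)p_0(t-s;0,0)\,ds = O(1)$ when one endpoint Green's function is bounded — here one uses that the free half-line Neumann kernel at the boundary is $(\pi s)^{-1/2}$, giving a convergent time integral). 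Once this single computation is in hand, the bracketing plus Karamata yields the stated constant $\frac{2}{L\deg(x)}$ for every $x\in[0,L)$, uniformly enough to be fed back into~\eqref{EquationOneProof} via dominated convergence (the domination being Lemma~\ref{Lemma1}).
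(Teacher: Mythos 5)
Your proposal is correct and takes essentially the same route as the paper: the on-diagonal short-time asymptotics $p_\sigma(t;x,x)\sim (4\pi t)^{-1/2}\cdot 2/\deg(x)$ obtained from the bracketing \eqref{BracketingI}, followed by Karamata's Tauberian theorem and Weyl's law (Theorem~\ref{WeylsLaw}) to pass from the spectral threshold $\lambda$ to the index $N$. The only difference is that the paper simply cites \cite[Proposition~8.1]{BER} for the heat-kernel asymptotics, whereas you derive them by hand via locality and a Duhamel estimate for the retained $\delta$-coupling (an argument you should also record verbatim at the interior vertices $x=\mv_m$, $m\ge 2$, which the lemma covers).
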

\begin{proof}[Proof of Lemma~\ref{LocalWeyl}] The proof employs, once again, the integral kernel $p_{\sigma}(t;\cdot,\cdot)$ of the operator $\mathrm{e}^{-H_{\sigma}t}$ on $\mathcal{G}$. More explicitly, from \eqref{BracketingI} in combination with \cite[Proposition~8.1]{BER} we immediately obtain
	\begin{align}\label{eq:short-time-asymptotics-heat kernel}
	p_{\sigma}(t;x,x) \sim \frac{1}{\sqrt{4\pi t}} \frac{2}{\deg(x)} \:\: \text{as} \:\: t \rightarrow 0^+\ .
	\end{align}
	Note that one also has the expansion 
	\begin{equation*}
	\sum_{n=1}^{\infty}\mathrm{e}^{-\lambda_n(\sigma) t}|f^{\sigma}_n(x)|^2=p_{\sigma}(t;x,x)\ .
	\end{equation*}
	Now, as in~\cite{BHJ,BifulcoKerner}, we can employ Karamata's Tauberian theorem to conclude
	\begin{align}\label{agn:vorstufeocalweyllaw}
	\sum_{\lambda_n(\sigma) \leq \lambda} |f^{\sigma}_n(x)|^2 \sim  \frac{2}{\pi \deg(x)}\sqrt{\lambda}\:\: \text{as} \:\: \lambda \rightarrow \infty\ .
	\end{align}
	Finally, we can employ Theorem~\ref{WeylsLaw} to replace the condition $\lambda_n(\sigma) \leq \lambda$ by $n \leq N$ while $(\frac{\pi N}{\mathcal{L}})^2 \sim \lambda$. This gives the statement.
\end{proof}

\section{A modified local Weyl law}\label{ModifiedWeyl}

In this section we want to illustrate that indeed new interesting features do appear on some infinite quantum graphs. For instance, in \cite{EggerSteinerInfinite} (see also \cite{AKM} for related models) the authors studied the Hamiltonian
\begin{equation}\label{Egger}
H_{\sigma}=-\frac{\mathrm{d}^2}{\mathrm{d}x^2}+\sigma\sum_{n=1}^{\infty}\delta(x-\mv_n)\ ,
\end{equation}
 on $L^2(\mathbb{R}_+)$ for $\sigma \geq 0$ and with $\mv_n=\sum_{m=1}^{n} \frac{\pi}{m}$; this implies that the associated path graph indeed has an infinite total length. They proved that $H_{\sigma}$ still has a purely discrete spectrum (see also the Molchanov-type theorem established in \cite{AKM}) but, at least for $\sigma=\infty$ which refers to Dirichlet boundary conditions in the vertices $\mv_n$, it exhibits non-standard Weyl asymptotics! Of course, in their setting, one is not able anymore to compare the eigenvalues $\left(\lambda_n(\sigma)\right)_{n \in \mathbb{N}}$ with the Neumann ``eigenvalues'' (meaning that $\sigma=0$) since the corresponding Neumann Laplacian is defined over a half-line and therefore does not possess a discrete spectrum. Nevertheless, one could still compare the eigenvalues for two different choices of $\sigma > 0$. 

    In any case, the non-standard Weyl asymptotics lead to a  modified local Weyl law as presented in the following theorem.
    \begin{theorem}[Modified local Weyl law]\label{ModifiedWeylTheorem} Consider the Hamiltonian~\eqref{Egger} with $\sigma=\infty$ over the infinite path graph of infinte length. Then, for all $x \in (0,\infty)$ with $x\neq \mv_n$, one has
    \begin{equation*}
    \lim_{N \rightarrow \infty} \frac{1}{\sqrt{\lambda(N)}}\sum_{n=1}^{N}|f^{\sigma=\infty}_n(x)|^2=\frac{1}{\pi}\ .
\end{equation*}
Here $\lambda=\lambda(N)$ is the function implicitly defined via the relation $N=\frac{\sqrt{\lambda}}{2}\ln \lambda$. On the other hand, for $x=\mv_n$, the above sum is equal to zero.
    \end{theorem}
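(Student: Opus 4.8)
The plan is to mimic the proof of Lemma~\ref{LocalWeyl} but with the non-standard Weyl asymptotics from \cite{EggerSteinerInfinite} replacing the classical one. First I would recall the spectral representation of the heat kernel, namely
\begin{equation*}
p_{\sigma=\infty}(t;x,x)=\sum_{n=1}^{\infty}\mathrm{e}^{-\lambda_n(\infty)t}|f^{\sigma=\infty}_n(x)|^2\ ,
\end{equation*}
and observe that for a fixed $x\in(0,\infty)$ with $x\neq \mv_n$ for all $n$, the point $x$ lies in the interior of one of the (bounded) edges of the graph. Since the Dirichlet decoupling makes the Schrödinger operator on that edge independent of the rest of the graph, the on-diagonal heat kernel $p_{\sigma=\infty}(t;x,x)$ coincides, for small $t$, with that of a Dirichlet Laplacian on a short interval, hence
\begin{equation*}
p_{\sigma=\infty}(t;x,x)\sim\frac{1}{\sqrt{4\pi t}}\qquad\text{as }t\rightarrow 0^+
\end{equation*}
(note $\deg(x)=2$ here, so the factor $2/\deg(x)$ equals $1$). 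For $x=\mv_n$ the Dirichlet condition forces every eigenfunction to vanish there, so the sum is identically zero; this disposes of the second assertion immediately.

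Next I would feed this small-time asymptotics into Karamata's Tauberian theorem. This yields
\begin{equation*}
\sum_{\lambda_n(\infty)\leq\lambda}|f^{\sigma=\infty}_n(x)|^2\sim\frac{1}{\sqrt{\pi}}\cdot\frac{\sqrt{\lambda}}{\Gamma(3/2)}=\frac{2}{\pi}\sqrt{\lambda}\qquad\text{as }\lambda\rightarrow\infty\ ,
\end{equation*}
exactly as in~\eqref{agn:vorstufeocalweyllaw}. The crucial difference to Lemma~\ref{LocalWeyl} now appears: to convert the spectral-parameter cutoff $\lambda_n(\infty)\leq\lambda$ into the index cutoff $n\leq N$, one must use the \emph{non-standard} Weyl law of \cite{EggerSteinerInfinite}, which states that the eigenvalue counting function $N_{\sigma=\infty}(\lambda)$ grows like $\frac{\sqrt\lambda}{2}\ln\lambda$ rather than proportionally to $\sqrt\lambda$. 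Inverting this relation defines the function $\lambda=\lambda(N)$ implicitly via $N=\frac{\sqrt{\lambda}}{2}\ln\lambda$, and substituting $\lambda=\lambda(N)$ in the displayed asymptotics gives
\begin{equation*}
\frac{1}{\sqrt{\lambda(N)}}\sum_{n=1}^{N}|f^{\sigma=\infty}_n(x)|^2\longrightarrow\frac{2}{\pi}\cdot\frac{1}{2}=\frac{1}{\pi}\ ,
\end{equation*}
where the extra factor $\tfrac12$ — absent in the finite-length case — is precisely the signature of the modified Weyl law.

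I expect the main obstacle to be the careful bookkeeping in the change of cutoff. In the classical setting one simply writes $N\sim\frac{\sqrt\lambda}{\pi}L$ and the substitution is transparent; here $N$ and $\sqrt\lambda$ are related through a slowly varying logarithmic factor, so one has to check that the two asymptotic relations (the local one indexed by $\lambda$ and the global Weyl law indexed by $\lambda$) can be composed without the $\ln\lambda$ factor spoiling the limit. Concretely, one needs that $\sum_{n=1}^{N}|f_n(x)|^2 = \sum_{\lambda_n(\infty)\le\lambda(N)}|f_n(x)|^2$ up to an error that is $o(\sqrt{\lambda(N)})$; since eigenvalues can have multiplicity and the counting function has jumps, one should interpret $\lambda(N)$ as (say) $\lambda_N(\infty)$ and verify that $\sqrt{\lambda_N(\infty)}\sim\sqrt{\lambda(N)}$ with $\lambda(N)$ as defined — this follows from \cite{EggerSteinerInfinite}, but it is the one place where the argument genuinely uses the structure of that paper rather than being a routine adaptation of Lemma~\ref{LocalWeyl}. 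A secondary, more minor point is to justify that the small-time heat-kernel asymptotics on the short edge containing $x$ really is uniform enough (or simply: is the standard one-dimensional Dirichlet asymptotics restricted to that fixed edge), which is immediate once one notes that the decoupled operator on that edge is just a one-dimensional Schrödinger operator with bounded potential on a compact interval.
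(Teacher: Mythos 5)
Your overall strategy is exactly the paper's: on-diagonal Dirichlet heat-kernel asymptotics $p_{\sigma=\infty}(t;x,x)\sim\frac{1}{\sqrt{4\pi t}}$, Karamata's Tauberian theorem, and then the non-standard Weyl law of \cite{EggerSteinerInfinite} to pass from the spectral cutoff $\lambda_n(\infty)\le\lambda$ to the index cutoff $n\le N$. The handling of $x=\mv_n$ and your remark about checking $\sqrt{\lambda_N(\infty)}\sim\sqrt{\lambda(N)}$ are both fine (the latter is actually more careful than the paper, which passes over this point). However, your constants are wrong in two places, and the two errors cancel. First, the Karamata step: with $p_{\sigma=\infty}(t;x,x)\sim\frac{1}{\sqrt{4\pi t}}$ the coefficient is $\frac{1}{\sqrt{4\pi}}=\frac{1}{2\sqrt{\pi}}$, so Karamata gives
\begin{equation*}
\sum_{\lambda_n(\infty)\leq\lambda}|f^{\sigma=\infty}_n(x)|^2\sim\frac{1}{2\sqrt{\pi}}\cdot\frac{\sqrt{\lambda}}{\Gamma(3/2)}=\frac{1}{\pi}\sqrt{\lambda}\ ,
\end{equation*}
not $\frac{2}{\pi}\sqrt{\lambda}$; you dropped the factor $\frac12$ from $\sqrt{4\pi}$. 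Note that this is also what \eqref{agn:vorstufeocalweyllaw} gives for $\deg(x)=2$, so your claim that $\frac{2}{\pi}\sqrt{\lambda}$ is ``exactly as in'' that display contradicts your own earlier observation that $2/\deg(x)=1$ here. (Sanity check: for the Dirichlet Laplacian on $[0,L]$ one has $|f_n(x)|^2=\frac{2}{L}\sin^2(n\pi x/L)$, which averages to $\frac1L$, and $N(\lambda)\sim\frac{L\sqrt{\lambda}}{\pi}$, giving $\frac{\sqrt{\lambda}}{\pi}$.)

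Second, and more importantly as a matter of understanding: there is no extra multiplicative factor $\frac12$ coming from the modified Weyl law. Since the theorem normalizes by $\sqrt{\lambda(N)}$ rather than by $N$, the Weyl law enters \emph{only} through the identification $\lambda_N(\infty)\sim\lambda(N)$; once that is granted, substituting $\lambda=\lambda(N)$ into the (correct) Karamata asymptotics immediately yields $\frac{1}{\pi}$ with no further constant. Your sentence attributing the factor $\frac12$ to ``the signature of the modified Weyl law'' is therefore a conceptual error — the $\frac12$ in $N=\frac{\sqrt{\lambda}}{2}\ln\lambda$ affects how $N$ and $\lambda$ are paired, not the value of the normalized limit. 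The genuine signature of the modified Weyl law is rather that the $N$-normalized average $\frac1N\sum_{n\le N}|f_n(x)|^2$ tends to $0$ (as the paper notes after the theorem), while the $\sqrt{\lambda(N)}$-normalized one stays at $\frac{1}{\pi}$. Your final answer is correct only because these two mistakes compensate; please redo the computation with the corrected Karamata constant and delete the spurious $\frac12$.
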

\begin{proof} The proof is similar to the one of Lemma~\ref{LocalWeyl}. In a first step one realizes that the local heat kernel asymptotics are the unchanged since locally one is on a (maybe small) interval with Dirichlet boundary conditions, i.e., 
	\[
	p_{\sigma=\infty}(t;x,x) \sim \frac{1}{\sqrt{4\pi t}}\:\: \text{as} \:\: t \rightarrow 0^+.
	\]
	Then, once again one employs the expansion 
	\begin{equation*}
	\sum_{n=1}^{\infty}\mathrm{e}^{-\lambda_n(\infty) t}|f^{\sigma=\infty}_n(x)|^2=p_{\sigma=\infty}(t;x,x)\ ,
	\end{equation*}
	as well as Karamata's Tauberian theorem to conclude
	\begin{align}\label{agn:vorstufeocalweyllaw}
	\sum_{\lambda_n(\infty) \leq \lambda} |f^{\sigma=\infty}_n(x)|^2 \sim  \frac{1}{\pi}\sqrt{\lambda}\:\: \text{as} \:\: \lambda \rightarrow \infty\ .
	\end{align}
	In a final step, we use the non-standard Weyl asymptotics derived in \cite[(39)]{EggerSteinerInfinite},
 $$N_{\sigma=\infty}(\lambda) \sim \frac{\sqrt{\lambda}}{2}\ln \lambda\ , $$
 to obtain
	\begin{align*}\label{agn:vorstufeocalweyllaw}
	\frac{1}{\sqrt{\lambda(N)}}\sum_{n \leq N} |f^{\sigma=\infty}_n(x)|^2 &\sim  \frac{1}{\pi}\:\: \text{as} \:\: N \rightarrow \infty\ . \qedhere
	\end{align*}
\end{proof}
Comparing Theorem~\ref{ModifiedWeylTheorem} with Lemma~\ref{LocalWeyl} we conclude that, for the infinite path graph with infinite total length, one has
    \begin{equation*}
    \lim_{N \rightarrow \infty} \frac{1}{N}\sum_{n=1}^{N}|f^{\sigma=\infty}_n(x)|^2=\lim_{\lambda \rightarrow \infty} \frac{1}{N_{\sigma = \infty}(\lambda)} \sum_{\lambda_n(\infty) \leq \lambda} \vert f_n^{\sigma=\infty}(x) \vert^2 = \lim_{\lambda \rightarrow \infty} \frac{2}{\pi \ln \lambda} = 0\ .
\end{equation*}
%
This again reflects the fact that the total length is infinite; more precisely, this should be compared with the right-hand side in Lemma~\ref{LocalWeyl}.
\section{Main Results II: Generalizing the results to more general (infinite) graphs}\label{MainResultsII}

In this section we outline a generalization of Theorem~\ref{MainResult0} and Theorem~\ref{MainResult} to a larger class of infinite quantum graphs, again of finite total length. More explicitly, we consider (infinite) quantum graphs that are constructed as follows: Let $\mathcal{G}_0=(\mV_0,\mE_0)$ be a finite, connected and compact metric graph of total length $\mathcal{L} > 0$ which we also assume to be, without loss of generality, loop-free. The graph Hilbert space is then given by 
\begin{equation}
    L^2(\mathcal{G}_0)=\bigoplus_{\me \in \mE_0}L^2(0,\ell_{\me})
\end{equation}
where $\ell_\me > 0$ denotes the length of the edge $\me \in \mE_0$. Let $q \in L^{\infty}(\mathcal{G}_0) \cap \bigoplus_{\me \in \mE_0}C^{\infty}(0,\ell_{\me})$ be a non-negative, real-valued potential on the graph $\mathcal{G}_0$ and $\gamma =(\gamma_\mv)_{\mv \in \mV_0}\in \mathbb{R}^{|\mV_0|}$ a vector with $\gamma_{\mv} \geq 0$. Then, we introduce the self-adjoint operator $\mathcal{H}^{q,\gamma}_{\mathcal{G}_0}$ via the quadratic form
\begin{equation}\label{FormNeumann}
h^{q,\gamma}_{\mathcal{G}_0}[f]=\int_{\mathcal{G}_0} \vert f^{\prime} \vert^2\ \mathrm{d}x + \int_{\mathcal{G}_0} q \vert f \vert^2\ \mathrm{d}x+\sum_{\mv \in \mV_0}\gamma_\mv|f(\mv)|^2
\end{equation}
for $f \in H^{1}(\mathcal{G}_0)$. Here, $H^1(\mathcal{G}_0) = \bigoplus_{\me \in \mE_0} H^1(0,\ell_\me) \cap C(\mathcal{G}_0)$ with $C(\mathcal{G}_0)$ referring to the space of continuous functions on $\mathcal{G}_0$ (the functions in $C(\mathcal{G}_0)$ are continuous across the vertices). Then, starting with the finite quantum graph $\mathcal{G}_0$, we add a countable number of vertices to the vertex set $\mV_0$ such that the set of additional vertices $\hat{\mV}$ is a collection of isolated points and the new graph $\mathcal{G}$ has an infinite vertex and edge set. More explicitly, we assume that the new vertices $\hat{\mV}$ are contained in the set $\bigcup_{\me \in \mE_0}(0,\ell_\me)$ and we further assume that each cluster point in the set of vertices $\mV_\mathcal{G} := \mV_0 \cup \hat{\mV}$ of $\mathcal{G}$ belongs to $\mV_0$ (in particular, there are only finitely many cluster points) and that it corresponds to a standard vertex, i.e., to some vertex $\mv \in \mV_0$ such that $\gamma_\mv = 0$, cf.\ Figures \ref{fig:y-graph} and \ref{fig:local-graph}.

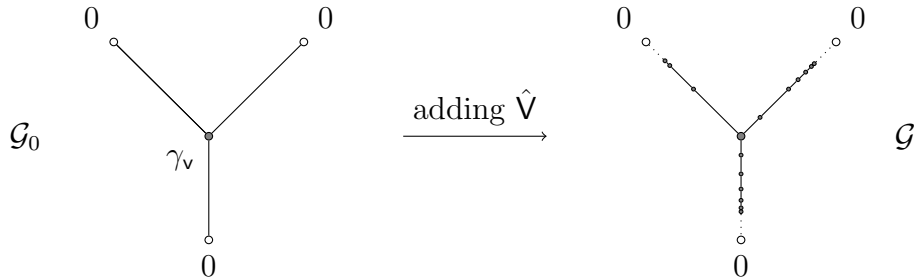
\begin{figure}[h]
\begin{tikzpicture}[scale=0.50]
      \tikzset{enclosed/.style={draw, circle, inner sep=0pt, minimum size=.10cm, fill=gray}}

      \node[enclosed, label={below left: $\gamma_\mv$}] (A) at (0,0) {};
      \node[enclosed, label={above right: $0$}, fill = white] (B) at (2.5,2.5) {};
      \node[enclosed, label={above left: $0$}, fill = white] (C) at (-2.5,2.5) {};
      \node[enclosed, label={below: $0$}, fill = white] (D) at (0,-2.75) {};
      \node[enclosed, white] (X1) at (5,0) {};
      \node[enclosed, white] (X2) at (9,0) {};
      \node[enclosed, label={left: $\mathcal{G}_0$},white] (X3) at (-4,0) {};
      \node[enclosed, label={left: $\mathcal{G}$}, white] (X4) at (19,0) {};

      \node[enclosed] (A') at (14,0) {};
      \node[enclosed, label={above right: $0$}, fill = white] (B') at (16.5,2.5) {};
      \node[enclosed, label={above left: $0$}, fill = white] (C') at (11.5,2.5) {};
      \node[enclosed, label={below: $0$}, fill = white] (D') at (14,-2.75) {};

      \node[enclosed, minimum size=.05cm] (A1) at (14.5,0.5) {};
      \node[enclosed, minimum size=.05cm] (A2) at (15.25,1.25) {};
      \node[enclosed, minimum size=.05cm] (A3) at (15.5,1.5) {};
      \node[enclosed, minimum size=.05cm] (A4) at (15.7,1.7) {};
      \node[enclosed, minimum size=.05cm] (A5) at (15.85,1.85) {};
      \node[enclosed, minimum size=.05cm] (A6) at (15.925,1.925) {};

      \node[enclosed, minimum size=.05cm] (B1) at (12.75,1.25) {};
      \node[enclosed, minimum size=.05cm] (B2) at (12.125,1.875) {};
      \node[enclosed, minimum size=.05cm] (B3) at (12,2) {};

      \node[enclosed, minimum size=.05cm] (C1) at (14,-0.5) {};
      \node[enclosed, minimum size=.05cm] (C2) at (14,-1) {};
      \node[enclosed, minimum size=.05cm] (C3) at (14,-1.4) {};
      \node[enclosed, minimum size=.05cm] (C4) at (14,-1.7) {};
      \node[enclosed, minimum size=.05cm] (C5) at (14,-1.9) {};
      \node[enclosed, minimum size=.05cm] (C6) at (14,-2) {};

      \draw (A) -- (B) node[midway, above] (edge1) {};
      \draw (A) -- (C) node[midway, above] (edge2) {};
      \draw (A) -- (C) node[midway, above] (edge3) {};
      \draw (A) -- (D) node[midway, above] (edge4) {};
      \draw (X1) edge[->] node[above] {\text{adding $\hat{\mV}$}} (X2) node[midway, above] (helpedge) {};

      \draw (A') -- (A1) node[midway, above] (redge1) {};
      \draw (A1) -- (A2) node[midway, above] (redge2) {};
      \draw (A2) -- (A3) node[midway, above] (redge3) {};
      \draw (A3) -- (A4) node[midway, above] (redge4) {};
      \draw (A4) -- (A5) node[midway, above] (redge5) {};
      \draw (A5) -- (A6) node[midway, above] (redge6) {};
      \draw (A6) edge[dotted] (B') node[midway, above] (redge4) {};

      \draw (A') -- (C1) node[midway, above] (dedge1) {};
      \draw (C1) -- (C2) node[midway, above] (dedge2) {};
      \draw (C2) -- (C3) node[midway, above] (dedge3) {};
      \draw (C3) -- (C4) node[midway, above] (dedge4) {};
      \draw (C4) -- (C5) node[midway, above] (dedge5) {};
      \draw (C5) -- (C6) node[midway, above] (dedge5) {};
      \draw (C6) edge[dotted] (D') node[midway, above] (dedge7) {};

      \draw (A') -- (B1) node[midway, above] (ledge1) {};
      \draw (B1) -- (B2) node[midway, above] (ledge2) {};
      \draw (B2) -- (B3) node[midway, above] (ledge3) {};
      \draw (B3) edge[dotted] (C') node[midway, above] (ledge3) {};
     \end{tikzpicture}
     \caption{An infinite $Y$-graph $\mathcal{G}$ with cluster points lying on the boundary.}\label{fig:y-graph}
     \end{figure}
     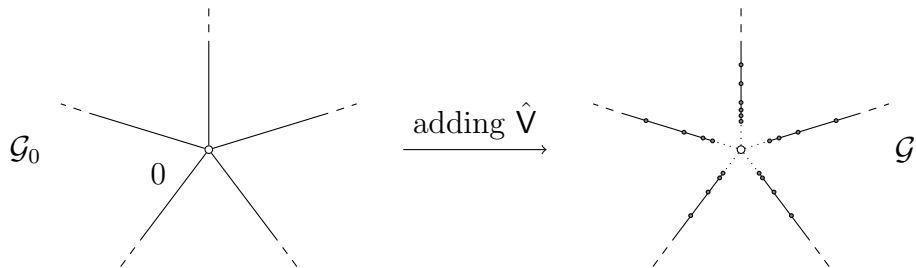
\begin{figure}[h]
\begin{tikzpicture}[scale=0.5]
      \tikzset{enclosed/.style={draw, circle, inner sep=0pt, minimum size=.10cm, fill=gray}, every loop/.style={}}

      \node[enclosed, fill=white, label={below left: $0\:\:\:\:$}] (Z) at (0,0) {};
      \node[enclosed, white] (A) at (0,3) {};
      \node[enclosed, white] (B) at (-3.25,1) {};
      \node[enclosed, white] (C) at (3.25,1) {};
      \node[enclosed, white] (D) at (-1.875,-2.5) {};
      \node[enclosed, white] (E) at (1.875,-2.5) {};
      \node[enclosed, white] (A') at (0,4) {};
      \node[enclosed, white] (B') at (-4,1.25) {};
      \node[enclosed, white] (C') at (4,1.25) {};
      \node[enclosed, white] (D') at (-2.425,-3.25) {};
      \node[enclosed, white] (E') at (2.425,-3.25) {};
      \node[enclosed, white] (A'') at (0.5,2.5) {};
      \node[enclosed, white] (ZA) at (0.5,0.75) {};
      \node[enclosed, white] (B'') at (-2.5,1.25) {};
      \node[enclosed, white] (ZB) at (-1,0.8125) {};
      \node[enclosed, white] (C'') at (2.5,0.25) {};
      \node[enclosed, white] (ZC) at (1,-0.1875) {};
      \node[enclosed, white] (E'') at (1,-2) {};
      \node[enclosed, white] (ZE) at (0,-0.6666) {};
      \node[enclosed, white] (D'') at (-2,-2) {};
      \node[enclosed, white] (ZD) at (-1,-0.6666) {};
      
      \node[enclosed, white] (X1) at (5,0) {};
      \node[enclosed, white] (X2) at (9,0) {};
      
      \node[enclosed, fill=white] (T) at (14,0) {};
      \node[enclosed, white] (F) at (14,3) {};
      \node[enclosed, minimum size=.05cm] (Fdotted) at (14,0.75) {};
      \node[enclosed, white] (G) at (10.75,1) {};
      \node[enclosed, minimum size=.05cm] (Gdotted) at (13.25,0.23077) {};
      \node[enclosed, white] (H) at (17.25,1) {};
      \node[enclosed, minimum size=.05cm] (Hdotted) at (14.75,0.23077) {};
      \node[enclosed, white] (I) at (12.125,-2.5) {};
      \node[enclosed, minimum size=.05cm] (Idotted) at (13.525,-0.625) {};
      \node[enclosed, white] (J) at (15.875,-2.5) {};
      \node[enclosed, minimum size=.05cm] (Jdotted) at (14.475,-0.625) {};
      \node[enclosed, white] (F') at (14,4) {};
      \node[enclosed, white] (G') at (10,1.25) {};
      \node[enclosed, white] (H') at (18,1.25) {};
      \node[enclosed, white] (I') at (11.575,-3.25) {};
      \node[enclosed, white] (J') at (16.425,-3.25) {};
      \node[enclosed, white] (F'') at (14.5,2.5) {};
      \node[enclosed, white] (TF) at (14.5,0.75) {};
      \node[enclosed, white] (G'') at (11.5,1.25) {};
      \node[enclosed, white] (TG) at (13,0.8125) {};
      \node[enclosed, white] (H'') at (16.5,0.25) {};
      \node[enclosed, white] (TH) at (15,-0.1875) {};
      \node[enclosed, white] (I'') at (15,-2) {};
      \node[enclosed, white] (TI) at (14,-0.6666) {};
      \node[enclosed, white] (J'') at (12,-2) {};
      \node[enclosed, white] (TJ) at (13,-0.6666) {};

      \node[enclosed, label={left: $\mathcal{G}_0$},white] (X3) at (-4,0) {};
      \node[enclosed, label={left: $\mathcal{G}$}, white] (X4) at (19,0) {};

	\draw[dashed] (A) edge node[above] {} (A') node[midway, above] (dotted1) {};
      \draw[dashed] (B) edge node[above] {} (B') node[midway, above] (dotted2) {};
      \draw[dashed] (C) edge node[above] {} (C') node[midway, above] (dotted3) {};
      \draw[dashed] (D) edge node[above] {} (D') node[midway, above] (dotted4) {};
      \draw[dashed] (E) edge node[above] {} (E') node[midway, above] (dotted5) {};
      \draw (Z) edge node[right] {} (A) node[midway, above] (edge1) {};
      \draw (Z) edge node[above] {} (B) node[midway, above] (edge2) {};
      \draw (Z) edge node[above] {} (C) node[midway, above] (edge3) {};
      \draw (Z) edge node[above] {} (D) node[midway, above] (edge4) {};
      \draw (Z) edge node[above] {} (E) node[midway, above] (edge5) {};

      \draw (X1) edge[->] node[above] {\text{adding $\hat{\mV}$}} (X2) node[midway, above] (helpedge) {};

      \draw[dashed] (F) edge node[above] {} (F') node[midway, above] (dotted1) {};
      \draw[dashed] (G) edge node[above] {} (G') node[midway, above] (dotted2) {};
      \draw[dashed] (H) edge node[above] {} (H') node[midway, above] (dotted3) {};
      \draw[dashed] (I) edge node[above] {} (I') node[midway, above] (dotted4) {};
      \draw[dashed] (J) edge node[above] {} (J') node[midway, above] (dotted5) {};
      \draw[dotted] (T) edge node[right] {} (Fdotted) node[midway, above] (edge1dotted) {};
      \draw (F) edge node[right] {} (Fdotted) node[midway, above] (edge1) {};
      \draw[dotted] (T) edge node[above] {} (Gdotted) node[midway, above] (edge2dotted) {};
      \draw (G) edge node[right] {} (Gdotted) node[midway, above] (edge2) {};
      \draw (Hdotted) edge node[above] {} (H) node[midway, above] (edge3) {};
      \draw[dotted] (T) edge node[above] {} (Hdotted) node[midway, above] (edge3dotted) {};
      \draw (Idotted) edge node[above] {} (I) node[midway, above] (edge4) {};
      \draw[dotted] (T) edge node[above] {} (Idotted) node[midway, above] (edge4dotted) {};
      \draw (Jdotted) edge node[above] {} (J) node[midway, above] (edge5) {};
      \draw[dotted] (T) edge node[above] {} (Jdotted) node[midway, above] (edge5dotted) {};

      \node[enclosed, minimum size=.05cm] (Fdotted1) at (14,2.25) {};
      \node[enclosed, minimum size=.05cm] (Fdotted2) at (14,1.75) {};
      \node[enclosed, minimum size=.05cm] (Fdotted3) at (14,1.25) {};
      \node[enclosed, minimum size=.05cm] (Fdotted4) at (14,1.05) {};
      \node[enclosed, minimum size=.05cm] (Fdotted5) at (14,0.9) {};
      
      \node[enclosed, minimum size=.05cm] (Gdotted1) at (11.5,0.76923) {};
      \node[enclosed, minimum size=.05cm] (Gdotted2) at (12.5,0.461538461) {};
      \node[enclosed, minimum size=.05cm] (Gdotted3) at (13,0.3076923) {};
      
      \node[enclosed, minimum size=.05cm] (Hdotted1) at (16.5,0.76923) {};
      \node[enclosed, minimum size=.05cm] (Hdotted2) at (15.5,0.461538461) {};
      \node[enclosed, minimum size=.05cm] (Hdotted3) at (15,0.3076923) {};

      \node[enclosed, minimum size=.05cm] (Idotted1) at (12.674998,-1.75) {};
      \node[enclosed, minimum size=.05cm] (Idotted1) at (13.1333305,-1.125) {};
      \node[enclosed, minimum size=.05cm] (Idotted2) at (13.4375,-0.75) {};
      
      \node[enclosed, minimum size=.05cm] (Jdotted1) at (15.325002,-1.75) {};
      \node[enclosed, minimum size=.05cm] (Jdotted1) at (14.8666695,-1.125) {};
      \node[enclosed, minimum size=.05cm] (Jdotted2) at (14.5625,-0.75) {};
      
     \end{tikzpicture}
     \caption{A graph $\mathcal{G}_0$ such that $\mathcal{G}$ has an inner cluster point in $\hat{\mV}$ of degree $5$.}\label{fig:local-graph}
     \end{figure}

Moreover, to each vertex $\hat{\mv} \in \hat{\mV}$ we associate a real number $\sigma_{\hat{\mv}} \geq 0$. Consequently, introducing the form 
\begin{equation}
    h^{q,\gamma,\sigma}_{\mathcal{G}}[f]=h^{q,\gamma}_{\mathcal{G}_0}[f]+\sum_{\mv \in \hat{\mV}}\sigma_\mv|f(\mv)|^2
\end{equation}
on the form domain $\mathcal{D}_{\mathcal{G};q,\gamma,\sigma}:= \mathcal{D}_\mathcal{G} := \{f \in H^1(\mathcal{G}_0): h^{q,\gamma,\sigma}_{\mathcal{G}}[f] < \infty \}$, we obtain a self-adjoint operator $\mathcal{H}^{q,\gamma,\sigma}_{\mathcal{G}}$ associated with $h^{q,\gamma,\sigma}_{\mathcal{G}}[\cdot]$ (similar as in Theorem~\ref{TheoremForm}; note that, since the cluster points belong to $\mV_0$, it is an immediate consequence that $\bigoplus_{\me \in \mE_0} C_0^\infty(0,\ell_\me)$ belongs to the form domain $\mathcal{D}_\mathcal{G}$ and hence it is densely defined).

Moreover, this operator has again purely discrete spectrum and we shall denote its $n$-th eigenvalue by $\lambda_n(q,\gamma,\sigma)$. Furthermore, the corresponding eigenvalue counting function again satisfies the classical Weyl law as formulated in Theorem~\ref{WeylLaw}. Now, to compare the spectrum of the self-adjoint operator $\mathcal{H}^{q,\gamma,\sigma}_{\mathcal{G}}$ with that of $\mathcal{H}^{q,\gamma}_{\mathcal{G}_0}$, we introduce the three quantities
\begin{equation}\label{eq:cesaro-1}
    \frac{1}{N}\sum_{n=1}^{N}\left(\lambda_n(q,\gamma,\sigma)-\lambda_n(q,\gamma,0)\right)\ ,
\end{equation}
\begin{equation}\label{eq:cesaro-2}
    \frac{1}{N}\sum_{n=1}^{N}\left(\lambda_n(q,\gamma,\sigma)-\lambda_n(0,\gamma,0)\right)\ ,
\end{equation}
\begin{equation}\label{eq:cesaro-3}
    \frac{1}{N}\sum_{n=1}^{N}\left(\lambda_n(q,\gamma,\sigma)-\lambda_n(0,0,0)\right)\ .
\end{equation}
We obtain the following statements.

\begin{theorem}\label{GenI} Let $\mathcal{G}$ be an infinite quantum graph as constructed above. Let $\mathcal{H}^{q,\gamma,\sigma}_{\mathcal{G}}$ be the associated Schrödinger operator with $(\sigma_{\hat{\mv}})_{\hat{\mv} \in \hat{\mV}} \notin \ell^1(\hat{\mV})$. Then
\begin{equation*}
\lim_{N \rightarrow \infty} \frac{1}{N}\sum_{n=1}^{N}\left(\lambda_n(q,\gamma,\sigma)-\lambda_n(q,\gamma,0)\right)=\infty\ ,
\end{equation*}
and
\begin{equation*}
\lim_{N \rightarrow \infty} \frac{1}{N}\sum_{n=1}^{N}\left(\lambda_n(q,\gamma,\sigma)-\lambda_n(0,\gamma,0)\right)=\infty\ ,
\end{equation*}
as well as 
\begin{equation*}
\lim_{N \rightarrow \infty} \frac{1}{N}\sum_{n=1}^{N}\left(\lambda_n(q,\gamma,\sigma)-\lambda_n(0,0,0)\right)=\infty\ .
\end{equation*}
\end{theorem}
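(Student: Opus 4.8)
The plan is to reduce Theorem~\ref{GenI} to the already-established results in the finite setting (\cite{BifulcoKerner,BSSS}) via a cut-off and monotonicity argument, exactly parallelling the proof of Theorem~\ref{MainResult0}. First I would treat the first limit, which is the direct analogue of Theorem~\ref{MainResult0}: for $M \in \mathbb{N}$ enumerate $\hat{\mV} = \{\hat{\mv}_1, \hat{\mv}_2, \dots\}$ and define the cut-off coupling $\sigma_M$ which agrees with $\sigma$ on $\hat{\mv}_1, \dots, \hat{\mv}_M$ and vanishes on the remaining added vertices. Then $\mathcal{H}^{q,\gamma,\sigma_M}_{\mathcal{G}}$ is a Schrödinger operator on a \emph{finite} quantum graph (only finitely many of the added vertices carry a nonzero coupling, so the others can be erased), and \cite[Theorem~1]{BifulcoKerner} (resp.\ \cite[Theorem~1.3]{BSSS}) applies to give
\begin{equation*}
\lim_{N \to \infty} \frac{1}{N}\sum_{n=1}^{N}\left(\lambda_n(q,\gamma,\sigma_M)-\lambda_n(q,\gamma,0)\right) = \frac{1}{\mathcal{L}}\sum_{j=1}^{M} c_{\hat{\mv}_j}\,\sigma_{\hat{\mv}_j}\ ,
\end{equation*}
where $c_{\hat{\mv}_j} \in \{1,2\}$ is the combinatorial factor from the finite result (equal to $2$ for a degree-one vertex, $1$ for an interior degree-two vertex, and the appropriate value at other degrees). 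Since $\sigma \ge \sigma_M \ge 0$ pointwise, form monotonicity gives $\lambda_n(q,\gamma,\sigma) \ge \lambda_n(q,\gamma,\sigma_M) \ge \lambda_n(q,\gamma,0)$ for every $n$, hence the Cesàro averages in question dominate the above expression for each $M$. Letting $M \to \infty$ and using $(\sigma_{\hat{\mv}})_{\hat{\mv} \in \hat{\mV}} \notin \ell^1(\hat{\mV})$ (so that $\sum_j \sigma_{\hat{\mv}_j} = \infty$, and a fortiori the weighted sum diverges since the weights are bounded below by $1$) forces the limit to be $+\infty$.

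For the second limit one writes the telescoping decomposition
\begin{equation*}
\frac{1}{N}\sum_{n=1}^{N}\left(\lambda_n(q,\gamma,\sigma)-\lambda_n(0,\gamma,0)\right) = \frac{1}{N}\sum_{n=1}^{N}\left(\lambda_n(q,\gamma,\sigma)-\lambda_n(q,\gamma,0)\right) + \frac{1}{N}\sum_{n=1}^{N}\left(\lambda_n(q,\gamma,0)-\lambda_n(0,\gamma,0)\right)\ .
\end{equation*}
The first summand tends to $+\infty$ by the step just completed. The second summand is a Cesàro average of eigenvalue differences for two Schrödinger operators on the \emph{fixed finite} graph $\mathcal{G}_0$ (adding a potential $q$), so \cite[Theorem~1]{BifulcoKerner}/\cite[Theorem~1.3]{BSSS} shows it converges to the finite limit $\frac{1}{\mathcal{L}}\int_{\mathcal{G}_0} q\,\mathrm{d}x$; in particular it is bounded. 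Hence the sum tends to $+\infty$. The third limit is handled identically by inserting one more telescoping term $\frac{1}{N}\sum_{n=1}^N(\lambda_n(0,\gamma,0)-\lambda_n(0,0,0))$, which again converges to the finite value $\frac{1}{\mathcal{L}}\sum_{\mv \in \mV_0}c_\mv\,\gamma_\mv$ by the finite-graph result, so it does not affect the divergence.

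The only genuine subtlety — and the step I expect to require the most care — is verifying that for each fixed $M$ the operator $\mathcal{H}^{q,\gamma,\sigma_M}_{\mathcal{G}}$ is genuinely covered by the finite-graph theorems of \cite{BifulcoKerner,BSSS}: one must check that erasing the added vertices with zero coupling gives back an honest finite metric graph (this uses that the $c_{\hat{\mv}_j}$ vanish-coupling vertices are just removable degree-two points, and that the finitely many surviving added vertices together with $\mV_0$ form a finite vertex set) and that the hypotheses of those theorems — non-negativity of $q$ and of the couplings, the combinatorial structure — are met. One should also confirm that the monotonicity $\lambda_n(q,\gamma,\sigma) \ge \lambda_n(q,\gamma,\sigma_M)$ is legitimate despite $\mathcal{D}_{\mathcal{G};q,\gamma,\sigma} \subseteq \mathcal{D}_{\mathcal{G};q,\gamma,\sigma_M}$: this is exactly the min–max form monotonicity already used in the proof of Theorem~\ref{MainResult0}, since $h^{q,\gamma,\sigma}_{\mathcal{G}} \ge h^{q,\gamma,\sigma_M}_{\mathcal{G}}$ on the smaller domain. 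Everything else is routine telescoping and invocation of the classical Weyl law (Theorem~\ref{WeylsLaw}, which holds here by the operator bracketing sketched in the text) to guarantee the finite limits are the ones claimed.
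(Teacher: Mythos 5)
Your argument is correct and follows essentially the same route as the paper: the first limit is obtained by the cut-off-plus-monotonicity reduction to the finite-graph theorem of \cite{BifulcoKerner,BSSS}, exactly as in the proof of Theorem~\ref{MainResult0}. The only (cosmetic) difference is that for the second and third limits the paper skips your telescoping step and simply notes that, since $q \ge 0$ and $\gamma \ge 0$, one has $\lambda_n(0,0,0) \le \lambda_n(0,\gamma,0) \le \lambda_n(q,\gamma,0)$, so the averages in \eqref{eq:cesaro-2} and \eqref{eq:cesaro-3} dominate the one in \eqref{eq:cesaro-1} termwise and diverge a fortiori.
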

\begin{proof} The proof of the first identity goes along the same path as the proof of Theorem~\ref{MainResult0}. It combines a monotonicity argument with the main result obtained in \cite{BifulcoKerner} for Schrödinger operators on finite graphs. Note that the regularity assumptions on the potential $q$ are in accordance with \cite{BifulcoKerner}. Moreover, the latter identities follow immediately from the first one, as the expressions in \eqref{eq:cesaro-2} and \eqref{eq:cesaro-3} clearly dominate the one in \eqref{eq:cesaro-1}. 
\end{proof}

\cite[Theorem~1]{BifulcoKerner} also allows us to deduce a counterpart of Theorem \ref{MainResult} for $\ell^1$-summable weights $(\sigma_{\hat{\mv}})_{\hat{\mv} \in \hat{\mV}}$.
\begin{theorem}\label{GenII} Let $\mathcal{G}$ be an infinite quantum graph as constructed above. Let $\mathcal{H}^{q,\gamma,\sigma}_{\mathcal{G}}$ be the associated Schrödinger operator with $(\sigma_{\hat{\mv}})_{\hat{\mv} \in \hat{\mV}} \in \ell^1(\hat{\mV})$. Then
\begin{equation}\label{eq:mwf-1}
\lim_{N \rightarrow \infty} \frac{1}{N}\sum_{n=1}^{N}\left(\lambda_n(q,\gamma,\sigma)-\lambda_n(q,\gamma,0)\right)=\frac{\sum_{\hat{\mv} \in \hat{\mV}}\sigma_{\hat{\mv}}}{\mathcal{L}}\ ,
\end{equation}
and
\begin{equation}\label{eq:mwf-2}
\lim_{N \rightarrow \infty} \frac{1}{N}\sum_{n=1}^{N}\left(\lambda_n(q,\gamma,\sigma)-\lambda_n(0,\gamma,0)\right)=\frac{\int_{\mathcal{G}_0} q \mathrm{d}x+\sum_{\hat{\mv} \in \hat{\mV}}\sigma_{\hat{\mv}}}{\mathcal{L}}\ ,
\end{equation}
as well as
\begin{equation}\label{eq:mwf-3}
\lim_{N \rightarrow \infty} \frac{1}{N}\sum_{n=1}^{N}\left(\lambda_n(q,\gamma,\sigma)-\lambda_n(0,0,0)\right)=\frac{\int_{\mathcal{G}_0} q \mathrm{d}x}{\mathcal{L}}+\frac{2}{\mathcal{L}}\sum_{\mv \in \mV_0}\frac{\gamma_{\mv}}{\deg({\mv})}+\frac{\sum_{\hat{\mv} \in \hat{\mV}}\sigma_{\hat{\mv}}}{\mathcal{L}}\ .
\end{equation}
\end{theorem}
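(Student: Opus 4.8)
The plan is to reduce the statement to the finite case via the same cut-off and monotonicity scheme used in the proof of Theorem~\ref{MainResult}, but now interpolating also in the parameters $q$ and $\gamma$ so that the three formulas can be read off from the single master formula for finite graphs. First I would fix the approximating graph: let $\hat{\mV}_M \subset \hat{\mV}$ be a finite subset exhausting $\hat{\mV}$ as $M \to \infty$, and write $\sigma_M$ for the weight obtained by keeping $\sigma_{\hat{\mv}}$ for $\hat{\mv} \in \hat{\mV}_M$ and setting the rest to zero. The operator $\mathcal{H}^{q,\gamma,\sigma_M}_{\mathcal{G}}$ lives on a genuinely finite quantum graph (finitely many vertices, each edge of positive length), so \cite[Theorem~1]{BifulcoKerner} applies and gives, for instance,
\begin{equation*}
\lim_{N \to \infty} \frac{1}{N}\sum_{n=1}^{N}\left(\lambda_n(q,\gamma,\sigma_M)-\lambda_n(q,\gamma,0)\right)=\frac{\sum_{\hat{\mv} \in \hat{\mV}_M}\sigma_{\hat{\mv}}}{\mathcal{L}}\ ,
\end{equation*}
and analogously for \eqref{eq:cesaro-2} and \eqref{eq:cesaro-3}, where the $q$- and $\gamma$-contributions appear exactly as on the right-hand sides of \eqref{eq:mwf-2} and \eqref{eq:mwf-3} (this is precisely the content of the master formula in \cite{BifulcoKerner}, which attributes to each potential $\int q$, to each vertex weight $\gamma_\mv$ the factor $2/\deg(\mv)$, and to each $\delta$-coupling $\sigma_{\hat\mv}$ — of degree two — the factor $1$). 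Then I would invoke a monotonicity argument: since adding back the remaining (non-negative) couplings only raises eigenvalues, and since this holds in each of the three comparisons simultaneously because the subtracted operator does not involve $\sigma$ at all, the Cesàro means are monotone in $M$; combined with the limiting formula above, passing $M\to\infty$ yields the claimed identities for $\ell^1$-summable $\sigma$. For the directions not already covered by monotonicity I would use the heat-kernel domination bound \eqref{BracketingI} (now in the generalized setting) exactly as in Lemma~\ref{Lemma1} to justify interchanging $\lim_N$ with the $\tau$-integral and the sum over $\hat{\mV}$, then apply the local Weyl law of Lemma~\ref{LocalWeyl}, whose proof carries over verbatim since the short-time heat-kernel asymptotics $p(t;x,x)\sim (4\pi t)^{-1/2}\,2/\deg(x)$ and the classical Weyl law both continue to hold here.

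Several routine points need checking but are not obstacles: that $\mathcal{H}^{q,\gamma,\sigma}_{\mathcal{G}}$ has discrete spectrum and obeys the classical Weyl law (asserted in the text, provable by the same bracketing $N_{\sigma=\infty}\le N\le N_{\sigma=0}$ as in Theorem~\ref{WeylsLaw}, using that $\mathcal{G}$ still has finite total length $\mathcal{L}$); that the local heat-kernel bound \eqref{BracketingI} survives when $q\neq 0$ and $\gamma\neq 0$ (it does, because a non-negative potential and non-negative vertex weights only decrease the heat kernel pointwise, and locally near a non-vertex point one is again on a small interval); and that the form is closed and densely defined, which the excerpt already remarks follows as in Theorem~\ref{TheoremForm}. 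The $q$-term and $\gamma$-terms require no limiting argument at all since they are already present in the finite graph $\mathcal{G}_0$, so they pass through the $M\to\infty$ limit untouched — this is why \eqref{eq:mwf-2} and \eqref{eq:mwf-3} differ from \eqref{eq:mwf-1} only by the fixed quantities $\int_{\mathcal{G}_0}q\,\mathrm{d}x$ and $\frac{2}{\mathcal{L}}\sum_{\mv\in\mV_0}\gamma_\mv/\deg(\mv)$.

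The main obstacle I anticipate is the interchange of limits near the cluster points. Because $\inf_{\me\in\mE_{\mathcal{G}}}\ell_\me = 0$, the eigenfunctions of $\mathcal{H}^{q,\gamma,\sigma}_{\mathcal{G}}$ need not be uniformly bounded (as the remark after Lemma~\ref{Lemma1} stresses), so one cannot simply quote a sup-bound; instead one must show that the Cesàro average $\frac1N\sum_{n=1}^N |f_n^{\tau\sigma}(\hat{\mv})|^2$ stays bounded uniformly in $\hat{\mv}\in\hat{\mV}$ and $\tau\in[0,1]$, which is exactly what the heat-kernel argument of Lemma~\ref{Lemma1} delivers provided \eqref{BracketingI} is available uniformly down to arbitrarily short edges. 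One should verify that the Dirichlet-comparison heat kernel $p_{\sigma=(\infty,\tau\sigma_{\hat{\mv}},\infty)}$ and the Neumann-comparison kernel $p_{\sigma=0}$ can be chosen so that the resulting constant $C$ in $\frac1N\sum |f_n|^2\le C$ is independent of how close $\hat{\mv}$ sits to a cluster point; since $p_{\sigma=0}$ is the Neumann heat kernel on the fixed interval of length $\mathcal{L}$ (or, in the generalized setting, on $\mathcal{G}_0$), its diagonal is bounded away from the vertices and blows up like $(4\pi t)^{-1/2}$ near them, and evaluated at $t = 1/\lambda_N(\infty)\sim (\pi N/\mathcal{L})^{-2}$ this gives $p_{\sigma=0}(t;x,x)\le C\,N$ uniformly — the delicate case being $x$ itself a cluster point, where one uses the Dirichlet lower kernel vanishes and the Neumann upper kernel still only grows like $\sqrt{N}$. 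Once this uniform bound is secured, Tonelli and dominated convergence finish the argument as in Section~\ref{ProofMainResult}, and the three formulas follow by collecting the $q$, $\gamma$, and $\sigma$ contributions separately.
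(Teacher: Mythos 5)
Your proposal is correct and follows essentially the same route as the paper: \eqref{eq:mwf-1} is obtained by the heat-kernel domination, uniform Ces\`aro bound and local Weyl law of Section~\ref{ProofMainResult}, and \eqref{eq:mwf-2}--\eqref{eq:mwf-3} then follow by adding the finite-graph contributions of $q$ and $\gamma$ supplied by \cite[Theorem~1]{BifulcoKerner}. The cut-off/monotonicity step you lead with is redundant for the $\ell^1$ case (it only yields the lower bound, as you yourself note), and the claimed $\sqrt{N}$ growth of the Neumann kernel at cluster points should read $N$, but neither point affects the argument.
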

\begin{proof} Up to notation, the proof of \eqref{eq:mwf-1} essentially follows Section~\ref{ProofMainResult}. Taking into account the results of \cite{BifulcoKerner} obtained for finite graphs, one can deduce \eqref{eq:mwf-2} and \eqref{eq:mwf-3} directly from~\eqref{eq:mwf-1}: more precisely, with \cite[Theorem~1]{BifulcoKerner}, one obtains
\[
\lim_{N \rightarrow \infty} \frac{1}{N} \sum_{n=1}^N (\lambda_n(q,\gamma,0) - \lambda_n(0,\gamma,0)) = \frac{\int_{\mathcal{G}_0} q \mathrm{d}x}{\mathcal{L}}\ ,
\]
as well as 
\[
\lim_{N \rightarrow \infty} \frac{1}{N} \sum_{n=1}^N (\lambda_n(q,\gamma,0) - \lambda_n(0,0,0)) = \frac{\int_{\mathcal{G}_0} q \mathrm{d}x}{\mathcal{L}} + \frac{2}{\mathcal{L}} \sum_{\mv \in \mV_0} \frac{\gamma_\mv}{\deg(\mv)}\ ,
\]
taking into account that in these cases we are dealing with finite compact graphs (since $\sigma = 0$). 
\end{proof}
\begin{remark}
   \begin{itemize}
   \item[]
   \item[]
       \item[(i)] Instead of the limit considered in \eqref{eq:mwf-2}, one may also look at the average corresponding to the spectral gaps $\lambda_n(q,\gamma,\sigma) - \lambda_n(q,0,0)$, $n \in \mathbb{N}$. Using the same arguments as before, one obtains
       \[
       \lim_{N \rightarrow \infty} \sum_{n=1}^N (\lambda_n(q,\gamma,\sigma) - \lambda_n(q,0,0)) = \frac{2 \sum_{\mv \in \mV_0} \frac{\gamma_\mv}{\deg(\mv)} + \sum_{\hat{\mv} \in \hat{\mV}} \sigma_{\hat{\mv}}}{\mathcal{L}}\ .
       \]
       \item[(ii)] In our considerations, cluster points are elements in the vertex set $\mV_0$ of the underlying compact finite metric graph $\mathcal{G}_0$. However, it is also possible generalize this and to allow for (finitely many!) cluster points lying not in $\hat{\mV}$ but somewhere along the edges of $\mathcal{G}_0$.

       \item[(iii)] In a future work, it might be interesting to translate our results to other infinite quantum graphs, for example, as considered in \cite{MugnoloTaeuferInfinite}. In particular, they study so-called comb graphs (see~\cite[Figure~3.1]{MugnoloTaeuferInfinite}) and it turns out that such types of graphs are not included in our setting, even with finite length. Hence, to say something more for such graphs, one would need to derive a (possibly modified) Weyl law and to understand heat kernel asymptotics on the diagonal.

   \end{itemize}
\end{remark} 

\subsection*{Acknowledgements}{PB was supported by the Deutsche Forschungsgemeinschaft DFG (Grant 397230547). We are happy to thank our recent visitors to the FernUniversität in Hagen as well as our colleagues for interesting and helpful discussions. We are also happy to thank A.~Kostenko (Ljubljana, Vienna), D.~Mugnolo (Hagen) as well as N. Nicolussi (Vienna) for remarks on an earlier version of the manuscript.}
	
	\vspace*{0.5cm}
	
	{\small
		\bibliographystyle{amsalpha}
		\bibliography{Literature}}

\end{document}